\def\@Rref#1{\hbox{\rm \ref{#1}}}
\def\Rref#1{\@Rref{#1}}
\theoremstyle{plain}
\newtheorem{theorem}{Theorem}[section]
\newtheorem{proposition}[theorem]{Proposition}
\newtheorem{lemma}[theorem]{Lemma}
\theoremstyle{definition}
\newtheorem{definition}{Definition}[section]
\newtheorem{example}[definition]{Example}
\newtheorem{remark}[definition]{Remark}
\newcommand{\im}{\mathop{\rm Im}\nolimits}
\newcommand{\re}{\mathop{\rm Re}\nolimits}
\begin{document}

\title[Cayley transforms and inverses of semigroup generators]{Decay estimates for
	Cayley transforms and inverses of semigroup generators
	via the $\mathcal{B}$-calculus}

\thispagestyle{plain}

\author{Masashi Wakaiki}
\address{Graduate School of System Informatics, Kobe University, Nada, Kobe, Hyogo 657-8501, Japan}
 \email{wakaiki@ruby.kobe-u.ac.jp}
 \thanks{This work was supported by JSPS KAKENHI Grant Number JP20K14362.}

\begin{abstract}
Let $-A$ be the generator of a bounded $C_0$-semigroup $(e^{-tA})_{t \geq 0}$ 
on a Hilbert space.
First we study the long-time  asymptotic behavior of the Cayley transform
$V_{\omega}(A) \coloneqq (A-\omega I) (A+\omega I)^{-1}$ with $\omega >0$.
We give a decay estimate for
$\|V_{\omega}(A)^nA^{-1}\|$ when $(e^{-tA})_{t \geq 0}$ is polynomially stable.
Considering the case where the parameter $\omega$ varies, we estimate
$\|(\prod_{k=1}^n V_{\omega_k}(A))A^{-1}\|$ for exponentially stable  $C_0$-semigroups
$(e^{-tA})_{t \geq 0}$.
Next 
we show that if the generator $-A$ of the bounded $C_0$-semigroup
has a bounded inverse, then 
$\sup_{t \geq 0} \|e^{-tA^{-1}} A^{-\alpha} \| < \infty$
for all $\alpha >0$.
We also present an estimate for the rate of decay of $\|e^{-tA^{-1}} A^{-1} \|$,
assuming that $(e^{-tA})_{t \geq 0}$ is polynomially stable.
To obtain these results, we use operator norm estimates offered by
a functional calculus called the $\mathcal{B}$-calculus.
\end{abstract}

\subjclass[2020]{Primary 47D06; Secondary 46N40 $\cdot$ 47A60}

\keywords{Cayley transform, inverse of generator, functional calculus, polynomial stability} 

\maketitle

\section{Introduction}
\label{sec:introduction}
When solving differential equations numerically, we employ
time-discretization methods.
The first topic is the long-time asymptotic behavior of numerical solutions.
Let $-A$ be a bounded $C_0$-semigroup $(e^{-tA})_{t \geq 0}$ on a Banach space $X$, and
consider the differential equation
\begin{equation}
	\label{eq:differential_eq}
	\dot x = -A x,\quad t \geq 0;\qquad x(0) = x_0 \in X.
\end{equation}
The Crank--Nicolson discretization scheme with parameter $\omega >0$ 
transforms the differential equation \eqref{eq:differential_eq}
into the following difference equation:
\begin{equation}
	\label{eq:difference_eq}
	x_d(n+1) = -V_{\omega}(A) x_d(n),\quad n \in \mathbb{N}_0 \coloneqq \{0,1,2,\dots \};\quad x_d(0) = x_0 \in X,
\end{equation}
where $V_{\omega}(A)$ is defined by $V_{\omega}(A) \coloneqq (A - \omega I)(A+ \omega I)^{-1}$ and is called {\em the Cayley transform of $A$ with parameter $\omega$}.
Assume that $A$ has a bounded inverse. To 
obtain a uniform rate of decay of
numerical  solutions $x_d$ starting in $D(A)$, we investigate
the quantitative behavior of the operator norm 
$\|V_{\omega}(A)^nA^{-1}\|$
as $n \to \infty$.

For simplicity of notation, let $V(A) \coloneqq V_{1}(A)$.
The asymptotic behavior of $(V(A)^n )_{n \in \mathbb{N}_0}$ has been extensively studied;
see the survey \cite{Gomilko2017} and the book \cite[Chapter~5]{Eisner2010}.
Some particular relevant studies will be cited below.
For the generator $-A$ of a
bounded $C_0$-semigroup on a Banach space, $\|V(A)^n\| = O(\sqrt{n})$
as $n \to \infty$ holds, i.e.,
there exist constants $M>0$ and $n_0 \in \mathbb{N}$ 
such that $\|V(A)^n\| \leq M\sqrt{n}$ for all $n \geq n_0$, 
and this estimate cannot be  improved in general; see \cite{Brenner1979, Piskarev2007, Esiner2008JEE, Gomilko2011}.
In the Hilbert space setting, it is still unknown whether the boundedness 
of $(e^{-tA})_{t \geq 0}$ 
implies that of $(V(A)^n )_{n \in \mathbb{N}_0}$.
For bounded $C_0$-semigroups on Hilbert spaces, the estimates
\begin{equation}
	\label{eq:CT_bound1}
	\|V(A)^n\| = O(\log n)\qquad (n \to \infty)
\end{equation}
and 
\begin{equation}
	\label{eq:CT_bound2}
	\sup_{n \in \mathbb{N}_0}\|V(A)^n
	(I+A)^{-\alpha}\| < \infty,\quad \alpha > 0
\end{equation}
remain the best so far.
The former estimate \eqref{eq:CT_bound1} has been obtained in \cite{Gomilko2004}, and
one can derive the latter estimate \eqref{eq:CT_bound2} by combining
\cite[Lemma~2.3]{Gomilko2023}
and the theory of the
$\mathcal{B}$-calculus established in \cite{Batty2021,Batty2021JFA}.
If both $A$ and $A^{-1}$ generate a bounded $C_0$-semigroup on a Hilbert space, 
then
$(V(A)^n)_{n \in \mathbb{N}_0}$ 
is bounded, which has been independently proved in \cite{Azizov2004,Gomilko2004,
	Guo2006}.
Under the additional assumption that $(e^{-tA})_{t \geq 0}$ is strongly stable,
$(V(A)^n)_{n \in \mathbb{N}_0}$ is also strongly stable; see \cite{Guo2006}.
The approach developed in \cite{Guo2006} is based on Lyapunov equations and
has been extended in 
\cite{Piskarev2007} from 
the case of constant parameters $V(A)^n$
to the case of variable parameters $\prod_{k=1}^n V_{\omega_k}(A)$, where 
the sequence $(\omega_k)_{k \in \mathbb{N}_0}$
satisfies $0 < \inf_{k \in \mathbb{N}}\omega_k$ and  $\sup_{k \in \mathbb{N}}\omega_k< \infty$.

In this paper, we consider 
bounded $C_0$-semigroups on Hilbert
spaces with certain decay properties.
In particular, we focus on polynomial stability and exponential stability, which are 
defined as follows:
\begin{definition}
	A $C_0$-semigroup $(e^{-tA})_{t \geq 0}$  on a Banach space is called
	\begin{enumerate}
		\renewcommand{\labelenumi}{(\alph{enumi})}
		\item 
		{\em polynomially stable with parameter $\beta>0$}
		if  $(e^{-tA})_{t \geq 0}$ is bounded and satisfies $\|e^{-tA}(I+A)^{-1}\| = O(t^{-1/\beta})$
		as $t \to \infty$, or simply {\em polynomially stable} if it is polynomially stable with some parameter; and
		\item {\em exponentially stable}
		if $\lim_{t \to \infty} \|e^{\varepsilon t}e^{-tA}\| = 0$ for some $\varepsilon >0$. 
	\end{enumerate}
\end{definition}
Note that if 
the $C_0$-semigroup $(e^{-tA})_{t \geq 0}$ is polynomially stable or exponentially stable, then
the negative generator $A$ has a bounded inverse; see \cite[Theorem~1.1]{Batty2008}.
For more information on these stability notions, 
we refer to the survey article \cite{Chill2020}.

Several results on the rate of decay of $\|V(A)^nA^{-1}\|$ 
have been developed in the Hilbert space setting.
If $(V(A)^n)_{n \in \mathbb{N}_0}$ is bounded, then
\begin{equation}
	\label{eq:CT_poly_pre}
	\|V(A)^nA^{-1}\| = O\left( 
	\left(
	\frac{\log n}{n}
	\right)^{1/(2+\beta)}
	\right) \qquad (n \to \infty)
\end{equation}
for all polynomially stable $C_0$-semigroups with parameter $\beta>0$; see
\cite{Wakaiki2021JEE}.
It has been also proved in \cite{Wakaiki2021JEE} 
that if $-A$ is normal and generates a polynomially stable $C_0$-semigroup with parameter $\beta>0$, then $\|V(A)^nA^{-1}\| = O(n^{-1/(2+\beta)})$
as $n \to \infty$ and the decay rate $n^{-1/(2+\beta)}$
cannot be replaced by a better one in general.
In the case where
$(e^{-tA})_{t \geq 0}$ is exponentially stable and $(e^{-tA^{-1}})_{t \geq 0}$
is bounded, 
the following estimate has been derived in \cite{Wakaiki2023IEOT}:
\begin{equation}
	\label{eq:CT_ex_pre}
	\left\|\left(\prod_{k=1}^n V_{\omega_k}(A)\right)A^{-1}\right\| =
	O\left( 
	\sqrt{\frac{\log n}{n}}
	\right)
	\qquad (n \to \infty),
\end{equation}
where $(\omega_k)_{k \in \mathbb{N}}$
satisfies
$0 < \inf_{k \in \mathbb{N}}\omega_k$ and  $\sup_{k \in \mathbb{N}}\omega_k< \infty$.
In the proof of the estimates \eqref{eq:CT_poly_pre} and \eqref{eq:CT_ex_pre},
Lyapunov equations on $V(A)$ and $-A^{-1}$
play an important role, as in \cite{Guo2006,Piskarev2007}. Hence 
the assumptions on the boundedness of $(V(A)^n)_{n \in \mathbb{N}_0}$ and $(e^{-tA^{-1}})_{t \geq 0}$ are placed.

Our aim is to estimate the decay rate of $\|V(A)^nA^{-1}\|$ without assuming
that $(V(A)^n)_{n \in \mathbb{N}_0}$ or $(e^{-tA^{-1}})_{t \geq 0}$ is bounded.
To this end, we apply the $\mathcal{B}$-calculus.
First, we derive a new estimate for operator norms in the context of 
the $\mathcal{B}$-calculus, which is tailored to 
polynomially stable $C_0$-semigroups on Hilbert spaces.
Using this estimate, we next show that 
\begin{equation}
	\label{eq:CT_poly_intro}
	\|V(A)^nA^{-1}\| = O\left( 
	\frac{\log n}{n^{1/(2+\beta)}}
	\right) \qquad (n \to \infty)
\end{equation}
for all polynomially stable $C_0$-semigroups $(e^{-tA})_{t \geq 0}$ on Hilbert spaces.
The estimate \eqref{eq:CT_poly_intro} 
is less sharp than \eqref{eq:CT_poly_pre} because it has
an additional logarithmic term, and we do not know whether the logarithmic term can be 
omitted.
However, it should be emphasized that 
the boundedness of $(V(A)^n)_{n \in \mathbb{N}_0}$ is not assumed.
When $-A$ is the generator of  an 
exponentially stable $C_0$-semigroup on a Hilbert space, we obtain
the estimate 
\begin{equation}
	\label{eq:CT_ex_intro}
	\left\|\left(\prod_{k=1}^n V_{\omega_k}(A)\right)A^{-1}\right\| =
	O\left( 
	\frac{1}{\sqrt{n}}
	\right)
	\qquad (n \to \infty),
\end{equation}
where $(\omega_k)_{k \in \mathbb{N}}$
satisfies 
$0 < \inf_{k \in \mathbb{N}}\omega_k$ and  $\sup_{k \in \mathbb{N}}\omega_k< \infty$.
This result implies that not only the assumption
on the boundedness of $(e^{-tA^{-1}})_{t \geq 0}$ but also
the logarithmic term $\sqrt{\log n}$ in the previous result \eqref{eq:CT_ex_pre} 
can be omitted. 
Moreover, we show that
the estimate \eqref{eq:CT_ex_intro} cannot be  improved
in the case $A \coloneqq iB+I$, where $B$ is a self-adjoint operator
with a certain spectral property.
A similar improved estimate is provided for a polynomially stable
$C_0$-semigroup on a Hilbert space such that the generator is normal.

We now turn our attention to the long-time
asymptotic  behavior of $(e^{-t A^{-1}})_{t \geq 0}$.
\sloppy
Let
$(e^{-tA})_{t \geq 0}$ be 
a bounded $C_0$-semigroup on a Banach space, and assume that
there exists a densely-defined algebraic inverse $A^{-1}$ of $A$.
The inverse generator problem raised by deLaubenfels \cite{deLaubenfels1988} asks whether
$-A^{-1}$ also generates a $C_0$-semigroup, and a survey can be found in
\cite{Gomilko2017}.
For bounded $C_0$-semigroups on Banach spaces, 
a negative answer to this  problem has been provided implicitly in \cite[pp.~343--344]{Komatsu1966}, and
other counterexamples can be found in \cite{Zwart2007, Gomilko2007MS,Fackler2016}.
The inverse generator problem in the Hilbert space setting remains open.
It has been shown in \cite{Zwart2007} that 
if the answer is positive for all generators and all Hilbert spaces,
then the $C_0$-semigroup $(e^{-tA^{-1}})_{t \geq 0}$ is bounded.
For exponentially stable $C_0$-semigroups on Hilbert spaces,
the estimate $\|e^{-tA^{-1}}\| = O(\log t)$ as $t \to \infty$ has been
obtained by using Lyapunov equations in \cite{Zwart2007SF} and by connecting
the growth bounds of $(e^{-tA^{-1}})_{t \geq 0}$ and 
$(V(A)^n )_{n \in \mathbb{N}_0}$ in \cite{Gomilko2011}.
This estimate has been extended to
bounded $C_0$-semigroups on Hilbert spaces with 
invertible generators by means of the $\mathcal{B}$-calculus in
\cite[Corollary~5.7]{Batty2021}.

Since
we are interested in the asymptotics of $e^{-tA^{-1}} x$ that is uniform with respect to 
$x \in D(A)$,
the problem we study is to estimate $\|e^{-tA^{-1}}A^{-1}\|$.
If $(e^{-t A^{-1}})_{t \geq 0}$ is an exponentially stable $C_0$-semigroup on a Banach space,
then 
$\|e^{-tA^{-1}} A^{-k}\| = O(t^{-k/2+1/4})$ for all $k \in \mathbb{N}$, which has been
established for the case $k=1$ in \cite{Zwart2007} 
and for the case $k \geq 2 $ in \cite{deLaubenfels2009}.
The $\mathcal{B}$-calculus has been applied in \cite{Wakaiki2023IEOT}
in order to
obtain the estimate 
\begin{equation}
	\label{eq:inv_gen_ex_intro}
	\|e^{-tA^{-1}} A^{-\alpha}\| = O\left( \frac{1}{t^{\alpha/2}} \right)\qquad (n \to \infty)
\end{equation} 
with
$\alpha>0$ for all
exponentially stable $C_0$-semigroups on Hilbert spaces.
If the generator $-A$ of a polynomially stable 
$C_0$-semigroup with parameter $\beta>0$
on a Hilbert space is normal,
then 
\begin{equation}
	\label{eq:inv_gen_poly_intro}
	\|e^{-tA^{-1}} A^{-\alpha}\| = O\left( \frac{1}{t^{\alpha/(2+\beta)}} \right)\qquad (n \to \infty)
\end{equation} 
for all $\alpha>0$; see \cite{Wakaiki2023IEOT}. In \cite{Wakaiki2023IEOT},
simple examples have been also provided to show that
the estimates \eqref{eq:inv_gen_ex_intro} and \eqref{eq:inv_gen_poly_intro} cannot be
improved in general.
For polynomially stable $C_0$-semigroups
with any parameters on Hilbert spaces,
the estimate 
\begin{equation}
	\label{eq:inv_gen_poly_bounded_intro}
	\sup_{t \geq 0}\|e^{-tA^{-1}} A^{-1}\| < \infty
\end{equation} 
has been derived in \cite{Wakaiki2023IEOT} by 
the Lyapunov-based approach.

Our second contribution is to improve the estimate \eqref{eq:inv_gen_poly_bounded_intro}
in two ways by means of the $\mathcal{B}$-calculus.
First, we show that $\sup_{t \geq 0}\|e^{-tA^{-1}} A^{-\alpha}\| < \infty$ 
for all $\alpha >0$
when $-A$ is the generator of a
bounded $C_0$-semigroup on a Hilbert space and has a bounded inverse.
Second, we give the estimate of the decay rate
\begin{equation}
	\label{eq:Inv_poly_intro}
	\|e^{-tA^{-1}} A^{-1}\| = O\left( 
	\frac{\log t}{t^{1/(2+\beta)}}
	\right) \qquad (n \to \infty)
\end{equation}
for all polynomially stable $C_0$-semigroups with parameter $\beta>0$ on 
Hilbert spaces. For this, 
the operator norm estimate exploiting polynomial stability is again applied 
in the setting of the $\mathcal{B}$-calculus.
We see  from the estimate \eqref{eq:Inv_poly_intro} that a decay estimate for
$(e^{-tA})_{t \geq 0}$ is transferred to that for
$(e^{-tA^{-1}})_{t \geq 0}$ as in the estimate \eqref{eq:CT_poly_intro} 
for $(V(A)^n)_{n \in \mathbb{N}_0}$.
There is a question whether the logarithmic term in
the estimate \eqref{eq:Inv_poly_intro} can be omitted, and it
is open as in other estimates.

This paper is organized as follows:
In Section~\ref{sec:B_calculus}, we present the notion 
and some properties of the $\mathcal{B}$-calculus.
Section~\ref{sec:CT} is devoted to estimating the rate of decay of
the Cayley transform.
In Section~\ref{sec:IG}, we study the asymptotic behavior of
$\|e^{-tA^{-1}} A^{-\alpha}\|$ with $\alpha >0$.

\subsection*{Notation}
Let $\mathbb{C}_+ \coloneqq \{ z \in \mathbb{C}: \re z > 0 \}$
and let $\mathbb{R}_+ \coloneqq \{ \xi \in \mathbb{R}: \xi \geq 0 
\}$.
Let $\mathbb{Z}$, $\mathbb{N}$, and $\mathbb{N}_0$
denote the set of integers, 
the set of positive integers, and 
the set of nonnegative integers,
respectively.
For a real number $\xi$, let $\lfloor \xi \rfloor \coloneqq
\max\{k \in \mathbb{Z}: k \leq \xi \}$.
Given functions $f,g\colon [t_0,\infty) \to (0,\infty)$,
we write
\[
f(t) = O \big(
g(t)
\big)\qquad (t \to \infty)
\]
if there exist constants $M>0$ and $t_1 \geq t_0$ such that 
$f(t) \leq M g(t)$ for all $t \geq t_1$.
When $0< \omega_p \leq \omega_q < \infty$,
we denote by $\mathcal{S}(\omega_p,\omega_q)$ the set of 
sequences $(\omega_k)_{k \in \mathbb{N}}$ of positive real numbers
satisfying $
\omega_p\leq \omega_k \leq 
\omega_q$
for all $k \in \mathbb{N}$.

Let $X$ be a Banach space.
We denote by $\mathcal{L}(X)$ the Banach algebra 
of bounded linear operators on $X$.
For a linear operator $A$ on $X$, let $D(A)$, $\sigma(A)$, and $\varrho(A)$
denote the domain, the spectrum, and the resolvent set of $A$, respectively.
Let $-A$ 
be the generator of a bounded $C_0$-semigroup $(e^{-tA})_{t \geq 0}$ 
on $X$.
We define the Cayley transform $V_\omega(A)$ of $A$ with parameter $\omega>0$
by $V_\omega(A) \coloneqq (A - \omega I )(A + \omega I)^{-1}$.
For simplicity of notation, we set $V(A) \coloneqq V_1(A)$.
If $A$ is injective, the fractional power $A^{\alpha}$ of $A$
is defined by the sectoral functional calculus for $\alpha \in \mathbb{R}$;
see \cite[Chapter~3]{Haase2006}.

Let $H$  be a Hilbert space.  We denote the inner product on $H$ by $\langle
\cdot, \cdot \rangle$.
For  a densely-defined linear operator $A$ on $H$,
the Hilbert space adjoint of $A$ is denoted by
$A^*$.

\section{$\mathcal{B}$-calculus}
\label{sec:B_calculus}
In this section, first we recall the notion of the $\mathcal{B}$-calculus and its basic properties.
Then we provide a new operator norm estimate in the $\mathcal{B}$-calculus 
for the negative generator of a 
polynomially stable $C_0$-semigroup on a Hilbert space.
\subsection{Definition and basic facts}
We provide some background material on the $\mathcal{B}$-calculus
and refer the reader to \cite[Sections~2 and 4]{Batty2021} for more details.
Let $\mathcal{B}$ be the algebra of holomorphic functions $f$ 
on $\mathbb{C}_+$ satisfying
\[
\|f\|_{\mathcal{B}_0} \coloneqq \int_{0}^{\infty} \sup_{\eta \in \mathbb{R}}
|f'(\xi+i\eta)| d\xi < \infty.
\]
For all $f \in \mathcal{B}$, the limit $f(\infty) \coloneqq \lim_{\re z \to \infty}
f(z)$ exists in $\mathbb{C}$ and 
\[
\|f\|_{\infty} \coloneqq 
\sup_{z \in \mathbb{C}_+} |f(z)| 
\leq |f(\infty)| + \|f\|_{\mathcal{B}_0}.
\]
Moreover, $\mathcal{B}$ equipped with the norm
\[
\|f\|_{\mathcal{B}} \coloneqq \|f\|_{\infty} + \|f\|_{\mathcal{B}_0},\quad f\in \mathcal{B}
\]
is a Banach algebra. 

Let $\mathrm{M}(\mathbb{R}_+)$ be the 
space of all bounded Borel measures $\mu$ on $\mathbb{R}_+$, endowed with the total variation norm $\|\mu\|_{\mathrm{M}(\mathbb{R}_+)} \coloneqq |\mu|(\mathbb{R}_+)$.
Then  $\mathrm{M}(\mathbb{R}_+)$ is a Banach algebra with respect to
the convolution product. 
We identify $L^1(\mathbb{R}_+)$ with a subalgebra of $\mathrm{M}(\mathbb{R}_+)$.
The Laplace transform of $\mu \in \mathrm{M}(\mathbb{R}_+)$ is the function on $\mathbb{C}_+$ defined by
\[
(\mathcal{L}\mu)(z) \coloneqq \int_{\mathbb{R}_+} e^{-tz} \mu(dt),\quad z \in \mathbb{C}_+.
\]
Define $\mathcal{LM} \coloneqq \{ \mathcal{L}\mu : \mu \in \mathrm{M}(\mathbb{R}_+) \}$.
Then $\mathcal{LM}$ equipped with the norm 
\[
\|\mathcal{L}\mu\|_{
	\mathrm{HP}} \coloneqq \|\mu\|_{\mathrm{M}(\mathbb{R}_+)},\quad  
\mu \in \mathrm{M}(\mathbb{R}_+)
\]
is a Banach algebra. 
If $f \in \mathcal{LM}$, then $f \in \mathcal{B}$ and $
\|f\|_{\mathcal{B}} \leq 2
\|f\|_{
	\mathrm{HP}}$.

We present some examples of $\mathcal{LM}$-functions, which will be used 
to obtain decay estimates for the 
Cayley transform and the inverse of semigroup generators.
\begin{example}
	\label{ex:LM_functions}
	\begin{enumerate}
		\renewcommand{\labelenumi}{(\alph{enumi})}
		\item 
		Let $c >0$ and $d \in \mathbb{R}$. Define 
		\[
		u_{c,d}(z) \coloneqq \frac{z+d}{z+c},\quad z \in \mathbb{C}_+.
		\]
		Then $u_{c,d}$ is the Laplace transform of $\delta_0 - (c-d)e_{c} \in \mathrm{M}(\mathbb{R}_+)$,
		where
		$\delta_0$ is the Dirac measure concentrated at 
		$0$ and $e_{c}(t) \coloneqq e^{-ct}$
		for $t \geq 0$. Hence 
		the $n$-th power 
		$(u_{c,d})^n$ belongs to $\mathcal{LM}$ for every $n \in \mathbb{N}$.
		
		\item 
		Let $\alpha,c >0$. Define 
		\[
		v_{\alpha,c} (z) \coloneqq \frac{1}{(z+c)^{\alpha}},\quad z \in \mathbb{C}_+.
		\]
		By
		the definition of the gamma function $\Gamma$, we obtain
		\[
		\Gamma(\alpha) = \int^{\infty}_0 t^{\alpha-1}e^{-t}dt = 
		(z+c)^{\alpha} \int^{\infty}_0 t^{\alpha-1} e^{-(z+c)t}dt
		\]
		for all positive real numbers $z >0$. Together with 
		the uniqueness theorem for holomorphic functions, this implies that
		$v_{\alpha,c}$ is the Laplace transform of the function $\phi_{\alpha,c} \in L^1(\mathbb{R}_+)$ defined by
		\[
		\phi_{\alpha,c}(t) \coloneqq \frac{t^{\alpha-1} e^{-ct}}{\Gamma(\alpha)},\quad t >0.
		\]
		Therefore, $v_{\alpha,c} \in \mathcal{LM}$.
		
		\item
		For a fixed $t >0$, define 
		\[
		w_t (z) \coloneqq \frac{z}{z+1} e^{-t/z},\quad z \in \mathbb{C}_+.
		\]
		Then
		$w_t$ is the Laplace transform of some $L^1(\mathbb{R}_+)$-function;
		see	 the proof of \cite[Theorem 3.3]{deLaubenfels2009}.
		Hence $w_t \in \mathcal{LM}$.
	\end{enumerate}
\end{example}

Now we introduce a functional calculus for $\mathcal{B}$.
Let $-A$ be the generator of a bounded $C_0$-semigroup $(e^{-tA})_{t \geq 0}$ 
on a Hilbert space $H$. We define $K \coloneqq \sup_{t \geq 0} \|e^{-tA}\|$.
For all $x\in H$, the Plancherel theorem gives 
\[
\int_{-\infty}^{\infty}
\|
(\xi -  i \eta+A)^{-1}x \|^2
d\eta =
2\pi
\int_0^{\infty}
e^{-2\xi t} \|e^{-tA} x\|^2 dt,
\]
and hence
\begin{align}
	\sup_{\xi >0 }
	\xi
	\int_{-\infty}^{\infty}
	\|
	(\xi -  i \eta+A)^{-1}x \|^2
	d\eta 
	\leq \pi K^2 \|x\|^2. 	\label{eq:bouded_int_bound1}
\end{align}
Similarly,
\begin{align}
	\sup_{\xi >0 }
	\xi
	\int_{-\infty}^{\infty}
	\|
	(\xi + i \eta+A^*)^{-1}y \|^2
	d\eta \leq \pi K^2 \|y\|^2
	\label{eq:bouded_int_bound2}
\end{align}
for all $y \in H$.
Combining these estimates with the Cauchy-Schwartz inequality, we obtain
\begin{equation}
	\label{eq:resol_int_bound}
	\sup_{\xi >0 }
	\xi
	\int_{-\infty}^{\infty}
	|\langle
	(\xi - i \eta+A)^{-2}x, y
	\rangle |
	d\eta \leq \pi K^2 \|x\| \, \|y\|
\end{equation}
for all $x,y \in H$.
Let $f \in \mathcal{B}$, and define the linear operator $f(A)$ on $H$ by
\begin{equation}
	\label{eq:fB_def}
	\langle
	f(A)x,y
	\rangle \coloneqq
	\langle 
	f(\infty)x,y
	\rangle 
	- \frac{2}{\pi}
	\int_0^{\infty} \xi
	\int_{-\infty}^{\infty}
	\langle
	(\xi - i\eta+A)^{-2} x, y
	\rangle
	f'(\xi+i \eta) d\eta d\xi
\end{equation}
for $x,y \in H$.
By the estimate \eqref{eq:resol_int_bound}, $f(A)$ is bounded on $H$ and satisfies
\begin{equation}
	\label{eq:fA_bound_Bnorm}
	\|f(A)\| \leq |f(\infty)| + 2 K^2 \|f\|_{\mathcal{B}_0} \leq 2 K^2 \|f\|_{\mathcal{B}}.
\end{equation}
The map
\[
\Phi_A \colon \mathcal{B} \to \mathcal{L}(H),\quad f \mapsto f(A)
\]
is a bounded algebra homomorphism.
We refer to $\Phi_A$ as {\em the $\mathcal{B}$-calculus for $A$}.

Define the map $\Pi_A\colon \mathcal{LM} \to \mathcal{L}(H)$ by
\[
\Pi_A(f)x \coloneqq \int_{\mathbb{R}_+} e^{-tA} x \mu(dt)
\]
for $f=\mathcal{L}\mu \in \mathcal{LM}$ and $x \in H$.
The map $\Pi_A$ is a bounded  algebra homomorphism and is called {\em the Hille-Phillips calculus for $A$}.
The $\mathcal{B}$-calculus extends the Hille-Phillips calculus in the sense that 
$\Phi_A(f) = \Pi_A(f)$ for all $f \in \mathcal{LM}$.

We apply the $\mathcal{B}$-calculus to the functions presented in Example~\ref{ex:LM_functions}.
\begin{example}
	\label{ex:LM_calculus} 
	Let $-A$ be the generator of a bounded $C_0$-semigroup
	on a Hilbert space  such that $0 \in \varrho(A)$.
	We define the functions $u_{c,d},v_{\alpha,c},w_t \in \mathcal{LM}$ as in Example~\ref{ex:LM_functions}.
	\renewcommand{\labelenumi}{(\alph{enumi})}
	\begin{enumerate}
		\item 
		Since $\Pi_A(u_{c,d}) = (A+dI) (A+cI)^{-1}$, we have 
		\[
		(u_{c,d})^n(A) =  ((A+dI) (A+cI)^{-1})^n.
		\]
		\item
		One has $\Pi_A(v_{\alpha,c} ) = (A+cI)^{-\alpha}$ by \cite[Proposition~3.3.5]{Haase2006}, 
		and hence
		$v_{\alpha,c}(A) = (A+cI)^{-\alpha}$. Moreover, since
		\begin{equation}
			\label{eq:inv_fractional}
			A^{-\alpha} =  (A+cI)^{-\alpha} (I+c A^{-1})^{\alpha},
		\end{equation}
		the estimate \eqref{eq:fA_bound_Bnorm} yields
		\[
		\|f(A) A^{-\alpha}\| \leq
		2 K^2\|(I+c A^{-1})^{\alpha}\| \,
		\|fv_{\alpha,c}\|_{\mathcal{B}_0}
		\]
		for all $f \in \mathcal{B}$. We shall use this estimate frequently without comment.
		
		\item 
		The argument in the proof of \cite[Corollary~5.8]{Batty2021} 
		shows that 
		\[
		w_{t}(A) = A(A+I)^{-1} e^{-tA^{-1}}.
		\]
	\end{enumerate}
\end{example}

\subsection{Operator norm estimate for polynomially stable semigroups}
The norm estimate \eqref{eq:fA_bound_Bnorm} can be applied to
every negative generator $A$ of a bounded $C_0$-semigroup on a Hilbert space.
Here we present a sharper norm estimate
for polynomially stable $C_0$-semigroups on Hilbert spaces.
First we recall a useful property on rates of polynomial decay for
bounded $C_0$-semigroups on Banach spaces; see
\cite[Proposition~3.1]{Batkai2006} for the proof.

\begin{proposition}
	\label{prop:frac_normalize} 
	Let $-A$ be the generator of a bounded $C_0$-semigroup $(e^{-tA})_{t\geq 0}$  \sloppy
	on a Banach space $X$ such that $0 \in \varrho(A)$.
	Then the following two statements are equivalent for a fixed $\beta>0$:
	\begin{enumerate}
		\renewcommand{\labelenumi}{\rm  \arabic{enumi}.}
		\item
		$\|e^{-tA}A^{-1}\| = O(t^{-1/\beta})$ as $t \to \infty$.
		\item
		$\|e^{-tA}A^{-\beta q}\| = O(t^{-q})$ as $t \to \infty$
		for some/all $q>0$.
	\end{enumerate}
\end{proposition}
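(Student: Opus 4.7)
The plan is to reduce everything to two classical ingredients from the sectorial calculus. Since $-A$ generates a bounded $C_{0}$-semigroup on a Banach space and $0\in\varrho(A)$, the operator $A$ is sectorial of angle at most $\pi/2$, so the fractional powers $A^{-\alpha}$ for $\alpha>0$ are bounded, commute with $e^{-tA}$, and satisfy $A^{-\alpha}A^{-\gamma}=A^{-(\alpha+\gamma)}$; see \cite[Chapter~3]{Haase2006}. Write $K:=\sup_{t\geq 0}\|e^{-tA}\|$.

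The two ingredients I would use are, first, the semigroup composition identity
\[
e^{-tA}A^{-n\alpha}=\bigl(e^{-(t/n)A}A^{-\alpha}\bigr)^{n},\qquad n\in\mathbb{N},\ \alpha>0,
\]
which is immediate from the commutativity above and the semigroup law; and second, the moment inequality for sectorial operators: for $0\leq\alpha_{0}\leq\alpha\leq\alpha_{1}$ with $\alpha=(1-\theta)\alpha_{0}+\theta\alpha_{1}$, there exists $C>0$ with
\[
\|A^{-\alpha}x\|\leq C\|A^{-\alpha_{0}}x\|^{1-\theta}\|A^{-\alpha_{1}}x\|^{\theta}
\]
for every $x$ in the Banach space.

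For the implication $1\Rightarrow 2$ for all $q>0$, the composition identity with $\alpha=1$ combined with statement~1 gives $\|e^{-tA}A^{-k}\|=O(t^{-k/\beta})$ for every $k\in\mathbb{N}$. Given $q>0$, I would choose $k\in\mathbb{N}$ with $k\geq\beta q$ and apply the moment inequality to $x=e^{-tA}y$ with $\alpha_{0}=0$, $\alpha_{1}=k$, $\alpha=\beta q$ and $\theta=\beta q/k$; inserting $\|e^{-tA}y\|\leq K\|y\|$ and the integer-$k$ estimate produces $\|e^{-tA}A^{-\beta q}\|=O(t^{-q})$. Conversely, for $2\Rightarrow 1$, fix $q_{0}>0$ from statement~2. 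If $\beta q_{0}\leq 1$, the composition identity with $\alpha=\beta q_{0}$ boosts the exponent: $\|e^{-tA}A^{-n\beta q_{0}}\|=O(t^{-nq_{0}})$ for any $n\in\mathbb{N}$, so after choosing $n$ with $n\beta q_{0}>1$ I may assume $\beta q_{0}>1$. The moment inequality applied to $x=e^{-tA}y$ with $\alpha_{0}=0$, $\alpha_{1}=\beta q_{0}$, $\alpha=1$ and $\theta=1/(\beta q_{0})$ then yields $\|e^{-tA}A^{-1}\|=O(t^{-q_{0}/(\beta q_{0})})=O(t^{-1/\beta})$. The remaining "for all $q$" $\Rightarrow$ "for some $q$" implication is trivial.

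The main obstacle is not algebraic but conceptual: one must verify that $A$ really is sectorial enough for the bounded fractional powers and moment inequality to apply, despite $\sigma(A)$ possibly touching the imaginary axis. This is ensured by the combination of semigroup boundedness (giving $\sigma(A)\subset\overline{\mathbb{C}_{+}}$ with uniformly bounded resolvent on $\mathbb{C}_{+}$) and $0\in\varrho(A)$ (removing a neighbourhood of the origin from $\sigma(A)$), which places us squarely in the framework of \cite[Chapter~3]{Haase2006}; once this is granted, the rest is bookkeeping with the two ingredients above.
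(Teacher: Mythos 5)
Your argument is correct and is essentially the standard proof: the paper itself does not prove Proposition~\ref{prop:frac_normalize} but cites \cite[Proposition~3.1]{Batkai2006}, and that proof rests on exactly your two ingredients, the semigroup composition identity $e^{-tA}A^{-n\alpha}=(e^{-(t/n)A}A^{-\alpha})^{n}$ and the moment inequality for the (bounded, sectorial) operator $A^{-1}$. The only point worth making explicit is the identification $(A^{-1})^{\alpha}=A^{-\alpha}$, which justifies applying Haase's moment inequality on all of $X$; with that noted, the bookkeeping goes through as you describe.
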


Next we establish a resolvent estimate analogous to 
\eqref{eq:bouded_int_bound1} for
polynomially stable $C_0$-semigroups on Hilbert spaces.
The proof is similar to that of  \cite[Proposition~3.1]{Wakaiki2021JEE}, but
in order to make this paper self-contained,
we give a short argument.
\begin{lemma}
	\label{lem:poly_int}
	Let $-A$ be the generator of a bounded $C_0$-semigroup $(e^{-tA})_{t\geq 0}$  \sloppy
	on a Hilbert space $H$ such that $0 \in \varrho(A)$.
	Then the following two statements are equivalent for fixed $\beta>0$ and 
	$q  \in (0,1/2)$:
	\begin{enumerate}
		\renewcommand{\labelenumi}{\rm  \arabic{enumi}.}
		\item $\|e^{-tA}A^{-1}\| = O(t^{-1/\beta})$ as $t \to \infty$. 
		\item There exists $M>0$ such that for all $x \in H$,
		\begin{equation}
			\label{eq:resol_int_cont_poly}
			\sup_{0<\xi<1}
			\xi^{1-2q } 
			\int_{-\infty}^{\infty}
			\|(\xi+i\eta+A)^{-1}A^{-\beta q } x\|^2 d\eta \leq M\|x\|^2.
		\end{equation}
	\end{enumerate}
\end{lemma}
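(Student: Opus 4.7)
The plan is to use the Plancherel identity already invoked in \eqref{eq:bouded_int_bound1}, applied to $y = A^{-\beta q}x$, to convert the weighted resolvent integral of statement~2 into a weighted $L^2$-integral of $t \mapsto \|e^{-tA}A^{-\beta q}x\|$. Explicitly,
\[
\int_{-\infty}^{\infty} \|(\xi + i\eta + A)^{-1}A^{-\beta q}x\|^2\,d\eta = 2\pi \int_0^{\infty} e^{-2\xi t}\|e^{-tA}A^{-\beta q}x\|^2\,dt,
\]
so the task reduces to controlling this weighted $L^2$-integral in both directions. Set $K \coloneqq \sup_{t \geq 0}\|e^{-tA}\|$.

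For the implication $1 \Rightarrow 2$, Proposition~\ref{prop:frac_normalize} converts the hypothesis into $\|e^{-tA}A^{-\beta q}\| = O(t^{-q})$ as $t \to \infty$. Combined with boundedness of $(e^{-tA})_{t \geq 0}$ and of $A^{-\beta q}$, this yields a uniform bound $\|e^{-tA}A^{-\beta q}x\| \leq C(1+t)^{-q}\|x\|$ for some $C>0$ and all $t \geq 0$. Substituting into the Plancherel identity and splitting the $t$-integral at $t=1$, the contribution from $[0,1]$ is $O(\xi^{1-2q}) = O(1)$ (since $1-2q > 0$ and $\xi < 1$), while on $[1,\infty)$ the change of variable $s = 2\xi t$ produces
\[
\xi^{1-2q}\int_1^{\infty} e^{-2\xi t}t^{-2q}\,dt = 2^{2q-1}\int_{2\xi}^{\infty} e^{-s}s^{-2q}\,ds \leq 2^{2q-1}\Gamma(1-2q),
\]
which is finite precisely because $q < 1/2$. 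This delivers \eqref{eq:resol_int_cont_poly}.

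For the implication $2 \Rightarrow 1$, I would recover pointwise decay of $\|e^{-tA}A^{-\beta q}x\|$ from the weighted $L^2$-bound by a semigroup-average argument. For $s \in [t/2, t]$, writing $e^{-tA} = e^{-(t-s)A}e^{-sA}$ and using that $A^{-\beta q}$ commutes with $e^{-sA}$ gives $\|e^{-tA}A^{-\beta q}x\| \leq K\|e^{-sA}A^{-\beta q}x\|$. Squaring, integrating over $[t/2, t]$, and inserting $e^{-2\xi s} \geq e^{-2\xi t}$ on this interval, I obtain
\[
\tfrac{t}{2}\|e^{-tA}A^{-\beta q}x\|^2 \leq K^2 e^{2\xi t}\int_0^{\infty} e^{-2\xi s}\|e^{-sA}A^{-\beta q}x\|^2\,ds.
\]
Combining with the Plancherel identity and statement~2, then choosing $\xi = 1/t$ for $t > 1$, leads to $\|e^{-tA}A^{-\beta q}\| = O(t^{-q})$, and Proposition~\ref{prop:frac_normalize} returns statement~1.

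The only delicate point I anticipate is the role of the restriction $q \in (0,1/2)$: it is exactly what makes the gamma integral $\Gamma(1-2q)$ converge in the first direction, and it keeps the weight $\xi^{1-2q}$ nontrivial as $\xi \to 0^+$ so that the choice $\xi = 1/t$ in the second direction produces the correct exponent $t^{-2q}$. Everything else is a routine marriage of Plancherel with a semigroup-average argument, in the spirit of \cite[Proposition~3.1]{Wakaiki2021JEE}.
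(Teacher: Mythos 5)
Your proposal is correct. The implication $1 \Rightarrow 2$ is essentially the paper's argument: Proposition~\ref{prop:frac_normalize}, the Plancherel identity \eqref{eq:Plancherel_resol_cond}, a split of the $t$-integral, and the gamma integral $\Gamma(1-2q)$ (the paper splits at a threshold $t_0$ from the decay estimate rather than at $t=1$, an immaterial difference). For $2 \Rightarrow 1$ you take a genuinely different route. The paper invokes the complex inversion formula of \cite[Corollary~III.5.16]{Engel2000}, which gives $|\langle e^{-tA}x,y\rangle| \leq \frac{e^{\xi t}}{2\pi t}\int_{-\infty}^{\infty}|\langle(\xi+i\eta+A)^{-2}x,y\rangle|\,d\eta$ for $x \in D(A^2)$, then combines \eqref{eq:resol_int_cont_poly} with the adjoint bound \eqref{eq:bouded_int_bound2} via Cauchy--Schwarz and finishes with a density argument before setting $\xi = 1/t$. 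Your semigroup-averaging argument --- integrating $\|e^{-tA}A^{-\beta q}x\|^2 \leq K^2\|e^{-sA}A^{-\beta q}x\|^2$ over $s \in [t/2,t]$, inserting the weight $e^{-2\xi s} \geq e^{-2\xi t}$, and converting back through Plancherel --- reaches the same rate $t^{-q}$ after the choice $\xi = 1/t$, but is more elementary: it avoids the inversion formula, the adjoint resolvent estimate, and the restriction to $D(A^2)$ with its attendant density step, working directly for all $x \in H$. The paper's route has the advantage of being the standard template in this literature (it parallels the resolvent characterizations of polynomial stability and the proof of \cite[Proposition~3.1]{Wakaiki2021JEE}), but your version is self-contained and arguably cleaner; the exponent bookkeeping ($\xi^{-(1-2q)/2}t^{-1/2}e^{\xi t}$ versus the paper's $\xi^{-(1-q)}t^{-1}e^{\xi t}$) lands on the same $O(t^{-q})$ either way.
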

\begin{proof}
	Let $q  \in (0,1/2)$ and let $K_1 \coloneqq \sup_{t \geq 0} \|e^{-tA}\|$. 
	By Proposition~\ref{prop:frac_normalize},
	the statement~1 holds if and only if
	there exist constants $K_2,t_0 >0$ such that
	\begin{equation}
		\label{eq:A_beta_q_estimate}
		\|e^{-tA}A^{-\beta q }\| \leq \frac{K_2}{t^q }\quad 
		\text{for all $t \geq t_0$}.
	\end{equation}
	
	(Proof of 1 $\Rightarrow$ 2)
	Let $x \in H$.
	The Plancherel theorem gives 
	\begin{equation}
		\label{eq:Plancherel_resol_cond}
		\int_{-\infty}^{\infty} \|(\xi+i\eta+A)^{-1}A^{-\beta q }x\|^2 d\eta =2\pi
		\int_0^{\infty} e^{-2\xi t} \|e^{-tA}A^{-\beta q }x\|^2 dt
	\end{equation}
	for all $\xi >0$.  	
	Using the estimate \eqref{eq:A_beta_q_estimate}, we obtain
	\begin{align*}
		&\int_0^{\infty} e^{-2\xi t} \|e^{-tA} A^{-\beta q }x\|^2 dt \\
		&\qquad =
		\int_0^{t_0} e^{-2\xi t} \|e^{-tA} A^{-\beta q }x\|^2 dt + 
		\int_{t_0}^{\infty} e^{-2\xi t} \|e^{-tA} A^{-\beta q }x\|^2 dt \\
		&\qquad \leq 
		t_0K_1^2  \|A^{-\beta q }\|^2 \, \|x\|^2 + 
		K_2^2 \|x\|^2 \int_{t_0}^{\infty} \frac{e^{-2\xi t}}{t^{2q }}dt
	\end{align*}
	for all $\xi >0$.
	Since
	\[
	\int_{t_0}^{\infty} \frac{e^{-2\xi t}}{t^{2q }}dt \leq  \frac{\Gamma(1-2q )}
	{(2\xi)^{1-2q }},
	\]
	where $\Gamma$ is a gamma function,
	we obtain
	\begin{align}
		&\sup_{0<\xi<1} \xi^{1-2q }
		\int_0^{\infty} e^{-2\xi t} \|e^{-tA} A^{-\beta q }x\|^2 dt \notag \\
		&\qquad \leq \left(t_0K_1^2  \|A^{-\beta q }\|^2 + K_2^2 
		\frac{\Gamma(1-2q )}{2^{1-2q }} \right) \|x\|^2.	\label{eq:int_cond_bound}
	\end{align}
	From \eqref{eq:Plancherel_resol_cond} and \eqref{eq:int_cond_bound},
	we conclude that the statement~2 holds.
	
	(Proof of 2 $\Rightarrow$ 1)
	Using the inverse formula given in \cite[Corollary~III.5.16]{Engel2000}, we have that 
	for all $x \in D(A^2)$, $y \in H$, and $t,\xi >0$, 
	\begin{align*}
		| \langle e^{-tA} x,y \rangle |
		&\leq \frac{e^{\xi t}}{2\pi t}
		\int_{-\infty}^{\infty}
		|
		\langle
		(\xi+i\eta+A)^{-2}x, y 
		\rangle
		| d\eta;
	\end{align*}
	see also \cite[Corollary~8.14]{Batty2021JFA} for the inversion formula.
	Together with \eqref{eq:bouded_int_bound2}, \eqref{eq:resol_int_cont_poly},
	and the Cauchy-Schwartz inequality, this estimate implies that
	there exists a constant $M_1>0$ such that 
	\[
	\|e^{-tA} A^{-\beta q } x\| 
	\leq \frac{M_1  e^{\xi t}}{\xi^{1-q} \, t} \|x\|
	\]
	for all $x \in D(A^2)$, $t>0$, and $0 < \xi < 1$.
	Since $D(A^2)$ is dense in $H$, 
	setting $\xi = 1/t$ yields
	\[
	\|e^{-tA} A^{-\beta q } \|  \leq 
	\frac{e M_1 }{ t^{q }}
	\]
	for all $t > 1$.
	Thus, $\|e^{-tA}A^{-1}\| = O(t^{-1/\beta})$ is obtained.
\end{proof}

Let $q  >0$, and define
\[
\|f\|_{\mathcal{B}_0,q } \coloneqq \int_0^{\infty} \psi_{q } (\xi)
\sup_{\eta \in \mathbb{R}}  |f'(\xi+i\eta)| d\xi,\quad f \in \mathcal{B},
\] 
where
\begin{equation}
	\label{eq:psi_def}
	\psi_{q }(\xi) \coloneqq 
	\begin{cases}
		\xi^{q }, & 0<\xi < 1, \\
		1, & \xi \geq 1.
	\end{cases}
\end{equation}
Since $0 < \psi_q(\xi) \leq 1$ for all $\xi >0$, we have
$\|f\|_{\mathcal{B}_0,q} \leq \|f\|_{\mathcal{B}_0}$ for all $f \in \mathcal{B}$.
When $-A$ is the generator of a polynomially stable $C_0$-semigroup with parameter $\beta>0$,
the operator norm
for $f(A)A^{-\beta q }$ can be upper-bounded by using 
$\|f\|_{\mathcal{B}_0,q }$.
\begin{proposition}
	\label{prop:poly_B_bound}
	Let 
	$-A$ be the generator of 
	a polynomially stable  $C_0$-semi\-group  with parameter $\beta >0$ on a Hilbert space $H$. Then, 
	for all $q  \in (0,1/2)$, there exists $M>0$
	such that 
	\begin{equation}
		\label{eq:fA_bound_Bnorm_poly}
		\|f(A) A^{-\beta q }\| \leq 
		\|A^{-\beta q } \|  \, |f(\infty)|  + 
		M \|f\|_{\mathcal{B}_0,q }
	\end{equation}
	for all $f \in \mathcal{B}$.
\end{proposition}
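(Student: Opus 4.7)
The plan is to start from the $\mathcal{B}$-calculus defining formula \eqref{eq:fB_def}, applied with $x$ replaced by $A^{-\beta q}u$ for $u \in H$. Since $A^{-\beta q}$ is bounded (as $0 \in \varrho(A)$) and commutes with every resolvent $(\xi - i\eta + A)^{-1}$, this yields, for all $u,y \in H$,
\[
\langle f(A)A^{-\beta q}u,y\rangle = \langle f(\infty)A^{-\beta q}u,y\rangle - \frac{2}{\pi}\int_0^{\infty}\xi\int_{-\infty}^{\infty}\langle (\xi-i\eta+A)^{-2}A^{-\beta q}u,y\rangle f'(\xi+i\eta)\,d\eta\,d\xi.
\]
The first term is bounded above by $\|A^{-\beta q}\|\,|f(\infty)|\,\|u\|\,\|y\|$, which accounts for the first summand on the right-hand side of \eqref{eq:fA_bound_Bnorm_poly}. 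It remains to control the double integral by a constant times $\|f\|_{\mathcal{B}_0,q}\,\|u\|\,\|y\|$.

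The core idea is to factor each integrand via Cauchy--Schwartz, writing $(\xi-i\eta+A)^{-2} = (\xi-i\eta+A)^{-1}\cdot(\xi-i\eta+A)^{-1}$, so that
\[
\int_{-\infty}^{\infty}|\langle (\xi-i\eta+A)^{-2}A^{-\beta q}u,y\rangle|\,d\eta \leq \Bigl(\int_{-\infty}^{\infty}\|(\xi+i\eta+A)^{-1}A^{-\beta q}u\|^2\,d\eta\Bigr)^{1/2}\Bigl(\int_{-\infty}^{\infty}\|(\xi+i\eta+A^*)^{-1}y\|^2\,d\eta\Bigr)^{1/2}.
\]
I then split the outer integral at $\xi=1$ to match the two pieces of $\psi_q$. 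For the adjoint factor, estimate \eqref{eq:bouded_int_bound2} gives a bound of order $\|y\|/\sqrt{\xi}$ throughout. For the other factor: when $0<\xi<1$, Lemma~\ref{lem:poly_int} yields a bound of order $\|u\|/\xi^{1/2-q}$, so the product of the two square roots is of order $\|u\|\,\|y\|/\xi^{1-q}$, and multiplying by the outer weight $\xi$ produces exactly $\xi^q = \psi_q(\xi)$; when $\xi\geq 1$, the standard estimate \eqref{eq:bouded_int_bound1} applied to $A^{-\beta q}u$ gives order $\|A^{-\beta q}\|\,\|u\|/\sqrt{\xi}$, so after multiplying by $\xi$ the net weight is $1 = \psi_q(\xi)$.

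Pulling the sup over $\eta$ onto $|f'(\xi+i\eta)|$ then delivers a bound of the form $M\int_0^{\infty}\psi_q(\xi)\sup_\eta |f'(\xi+i\eta)|\,d\xi\,\|u\|\,\|y\| = M\|f\|_{\mathcal{B}_0,q}\|u\|\,\|y\|$, with $M$ depending only on $\beta$, $q$, $\|A^{-\beta q}\|$ and the constants from \eqref{eq:bouded_int_bound2} and Lemma~\ref{lem:poly_int}. Taking the supremum over $\|u\|,\|y\|\leq 1$ yields \eqref{eq:fA_bound_Bnorm_poly}. There is no real obstacle here beyond bookkeeping; the only slightly delicate point is checking that the exponents $\tfrac{1}{2}-q$ and $\tfrac{1}{2}$ from the two square roots combine with the outer $\xi$ to give precisely the weight $\psi_q(\xi)$ in both regimes, which is why the hypothesis $q\in(0,1/2)$ is exactly what is needed for Lemma~\ref{lem:poly_int} to be applicable.
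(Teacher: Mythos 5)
Your proposal is correct and follows essentially the same route as the paper's proof: both apply Cauchy--Schwartz to split the squared resolvent, use Lemma~\ref{lem:poly_int} on $0<\xi<1$ and the standard bound \eqref{eq:bouded_int_bound1} on $\xi\geq 1$ together with \eqref{eq:bouded_int_bound2} for the adjoint factor, and check that the exponents combine to give the weight $\psi_q(\xi)$. The only cosmetic difference is that you substitute $x=A^{-\beta q}u$ at the outset, whereas the paper works with $x\in D(A^{\beta q})$ and expresses the bound in terms of $\|A^{\beta q}x\|$; these are equivalent.
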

\begin{proof}
	Let $q  \in (0,1/2)$ and let $K \coloneqq \sup_{t \geq 0} \|e^{-tA}\|$.
	For all $x, y \in H$ and
	$f\in \mathcal{B}$,
	\begin{align}
		&
		\left| \int_0^{\infty} \xi
		\int_{-\infty}^{\infty}
		\langle
		(\xi - i \eta + A)^{-2}x,y
		\rangle f'(\xi+i\eta) d\eta d\xi \right| 
		\notag 
		\\
		&~~\leq
		\int_0^{\infty} \psi_{q } (\xi) \sup_{\eta \in \mathbb{R}}  |f'(\xi+i\eta)|
		\frac{\xi}{\psi_{q }(\xi)}
		\int_{-\infty}^{\infty}
		|\langle
		(\xi - i \eta + A)^{-2}x,y
		\rangle  |d\eta d\xi.
		\label{eq:fAx_x}
	\end{align}
	To the right-hand side,
	we apply
	Lemma~\ref{lem:poly_int} for $0<\xi < 1$
	and the estimate \eqref{eq:bouded_int_bound1}  for $\xi \geq 1$.
	For $x \in D(A^{\beta q })$,
	we replace $x$ by  $A^{\beta q}x$  in
	Lemma~\ref{lem:poly_int} and use the inequality
	$\|x\| \leq \|A^{-\beta q}\|\, \|A^{\beta q} x\|$ in \eqref{eq:bouded_int_bound1}.
	Then we see that
	there exists $M_1>0$ such that 
	\begin{equation}
		\label{eq:poly_int_bound}
		\sup_{\xi>0}
		\frac{\xi}{\psi_{q }(\xi)^2}
		\int_{-\infty}^{\infty}
		\|
		(\xi - i \eta + A)^{-1}x \|^2
		d\eta 
		\leq M_1^2 \|A^{\beta q } x\|^2
	\end{equation}
	for all $x \in D(A^{\beta q })$.
	The estimates~\eqref{eq:bouded_int_bound2} and
	\eqref{eq:poly_int_bound} together with
	the Cauchy-Schwartz inequality 
	imply
	\begin{equation}
		\label{eq:sup_xi_estimate}
		\sup_{\xi >0 }
		\frac{\xi}{\psi_{q }(\xi)}
		\int_{-\infty}^{\infty}
		|\langle
		(\xi - i \eta + A)^{-2}x,y
		\rangle  |d\eta  \leq \sqrt{\pi} KM_1 \|A^{\beta q } x \|  \, \|y\|
	\end{equation}
	for all $x \in D(A^{\beta q })$ and $y \in H$. 
	Applying
	the estimates \eqref{eq:fAx_x} and \eqref{eq:sup_xi_estimate}
	to the definition \eqref{eq:fB_def} of $f(A)$, we derive
	\[
	|\langle
	f(A)x,y
	\rangle
	| \leq 
	|f(\infty)| \, \|x\| \, \|y\| + 
	\frac{2KM_1 }{\sqrt{\pi}} \|f\|_{\mathcal{B}_0,q }  \, \|A^{\beta q } x \| \, \|y\|
	\]
	for all $x \in D(A^{\beta q })$, $y \in H$, and $f \in \mathcal{B}$.
	Thus,
	\[
	\|f(A) A^{-\beta q }\| \leq 
	\|A^{-\beta q } \| \, |f(\infty)| + 
	\frac{2KM_1 }{\sqrt{\pi}} \|f\|_{\mathcal{B}_0,q }
	\]
	for all $f \in \mathcal{B}$.
\end{proof}

In Sections~\ref{sec:CT} and \ref{sec:IG},
we will derive operator norm estimates for the Cayley transform and
the inverse of a semigroup generator from
estimates of $\|f\|_{\mathcal{B}_0}$ or $\|f\|_{\mathcal{B}_0,q}$ 
for the corresponding $\mathcal{B}$-functions $f$, by using
the inequalities \eqref{eq:fA_bound_Bnorm} and \eqref{eq:fA_bound_Bnorm_poly}.
The inequality
\eqref{eq:fA_bound_Bnorm_poly} will be applied in the following way.
\begin{example}
	\label{ex:bound_poly}
	Let $q \in (0, 1/2)$ and let
	$-A$ be the generator of 
	a polynomially stable  $C_0$-semigroup with parameter $\beta >0$ on a Hilbert space.
	For 
	$\alpha,c >0$,
	we define $v_{\alpha,c} \in \mathcal{LM}$ as in Example~\ref{ex:LM_functions}.(b).
	Using \eqref{eq:inv_fractional},
	we obtain
	\[
	f(A)A^{-\alpha-\beta q} = (fv_{\alpha,c})(A)A^{-\beta q} ( I + cA^{-1} )^{\alpha}
	\]
	for all $f \in \mathcal{B}$. Therefore, 
	Proposition~\ref{prop:poly_B_bound} shows that
	there exists $M>0$ such that 
	\[
	\|f(A)A^{-\alpha-\beta q} \| 
	\leq M  \|fv_{\alpha,c}\|_{\mathcal{B}_0,q}
	\]
	for all $f \in \mathcal{B}$.
\end{example}

\section{Estimates for Cayley transforms}
\label{sec:CT}
In this section, first we study the
asymptotic behavior of $\|V(A)^n A^{-\alpha}\|$ for $\alpha >0$
when $-A$ is the generator of 
a polynomially stable $C_0$-semigroup on a Hilbert space.
Next we turn our attention to exponentially stable $C_0$-semigroups on
Hilbert spaces
and 
the case of variable parameters $\| ( \prod_{k=1}^nV_{\omega_k}(A) )A^{-\alpha} \|$.
\subsection{Case of constant parameters}
When $-A$ is the generator of a polynomially stable $C_0$-semigroup on a Hilbert space,
the following 
estimate for $\|V(A)^n A^{-\alpha}\|$ holds
without the assumption that 
$(V(A)^n)_{n \in \mathbb{N}_0}$ is bounded.
Recall that $\lfloor \xi \rfloor$ is the largest integer less than or 
equal to $\xi \in \mathbb{R}$.
\begin{theorem}
	\label{thm:CT_bound_poly}
	Let 
	$-A$ be the generator of 
	a polynomially stable $C_0$-semigroup 
	with parameter $\beta >0$
	on a Hilbert space $H$. Then,
	for all $\alpha >0$, 
	\begin{equation}
		\label{eq:CT_bound}
		\|V(A)^n A^{-\alpha}\| = O\left(
		\frac{(\log n)^{k+1} }{n^{\alpha / (2+\beta)}}
		\right)\quad (n \to \infty),\quad \text{where~}
		k \coloneqq \left\lfloor
		\frac{2\alpha}{2+\beta}
		\right \rfloor.
	\end{equation}
\end{theorem}

Let $n \in \mathbb{N}$ and $p >0$.
Define
\begin{equation}
	\label{eq:fn_CT_poly}
	f_{n,2p}(z) \coloneqq \frac{(z-1)^n}{(z+1)^{n+2p}},\quad z \in \mathbb{C}_+.
\end{equation}
Then
$f_{n,2p} \in \mathcal{LM}$; 
see  Example~\ref{ex:LM_functions}.
To prove Theorem~\ref{thm:CT_bound_poly}, we need a preliminary
estimate
for $\|f_{n,2p}\|_{\mathcal{B}_0, q }$.
\begin{proposition}
	\label{prop:f_n_norm}
	For $n \in \mathbb{N}$ and $p >0$, 
	define the function $f_{n,2p}$ by \eqref{eq:fn_CT_poly}.
	Then, for all $q  \in (0,1/2)$, 
	\begin{align}
		\label{eq:f_n_norm_poly}
		\|f_{n,2p}\|_{\mathcal{B}_0, q } = 
		\begin{cases}
			O\left(
			\dfrac{1}{n^p}
			\right),& p <  q , \vspace{3pt}\\
			O\left(
			\dfrac{\log n}{n^p}
			\right), 
			& p=  q ,
			\qquad (n \to \infty). \vspace{3pt}\\
			O\left(
			\dfrac{1}{n^{ q }}
			\right),
			& p >  q,
		\end{cases}
	\end{align}
\end{proposition}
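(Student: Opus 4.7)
The plan is to reduce the estimate of $\|f_{n,p}\|_{\mathcal{B}_0,q}$ to a scalar integral. I would first compute $f_{n,p}'$ explicitly, then bound $\sup_{\eta \in \mathbb{R}} |f_{n,p}'(\xi+i\eta)|$ by a function of $\xi$ alone, and finally integrate against $\psi_q$ after splitting according to the natural scale $\xi \sim 1/n$. A direct differentiation gives
\begin{equation*}
f_{n,p}'(z) \;=\; -\,\frac{2\bigl(pz - (n+p)\bigr)(z-1)^{n-1}}{(z+1)^{n+2p+1}},
\end{equation*}
so that
\begin{equation*}
|f_{n,p}'(\xi+i\eta)|^{2} \;=\; \frac{4\bigl((n+p-p\xi)^{2} + p^{2}\eta^{2}\bigr)\bigl((\xi-1)^{2}+\eta^{2}\bigr)^{n-1}}{\bigl((\xi+1)^{2}+\eta^{2}\bigr)^{n+2p+1}}.
\end{equation*}

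\textbf{Pointwise bound on the sup.} The technical heart of the proof is to establish
\begin{equation*}
\sup_{\eta \in \mathbb{R}} |f_{n,p}'(\xi+i\eta)| \;\leq\; \frac{C(p)}{(\xi+1/n)^{p+1}\, n^{p}}\qquad (\xi > 0),
\end{equation*}
for a constant $C(p)$ independent of $n$. I would derive this by the substitution $u = 4\xi/((\xi+1)^{2}+\eta^{2}) \in (0,\,4\xi/(\xi+1)^{2}]$, which converts the ratio $((\xi-1)^{2}+\eta^{2})/((\xi+1)^{2}+\eta^{2})$ into $1-u$. After simplification,
\begin{equation*}
|f_{n,p}'|^{2} \;=\; \frac{n^{2}\,u^{2p+2}(1-u)^{n-1}}{4^{2p+1}\,\xi^{2p+2}} \;+\; \text{lower-order terms},
\end{equation*}
and the unconstrained maximum of $u^{2p+2}(1-u)^{n-1}$ is attained at $u^{*} = (2p+2)/(n+2p+1) = O(1/n)$. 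For $\xi \gtrsim 1/n$ the critical $u^{*}$ lies inside the admissible interval and the Laplace-type evaluation gives $\sup_{\eta}|f_{n,p}'|^{2} \leq C(p)/(\xi^{2p+2}n^{2p})$. For $\xi \lesssim 1/n$ the supremum is attained at the endpoint $u = 4\xi/(\xi+1)^{2}$, yielding $\sup_{\eta}|f_{n,p}'| = O(n)$. Both regimes are summarised by the displayed bound.

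\textbf{Integration and case analysis.} With the pointwise bound in hand,
\begin{equation*}
\|f_{n,p}\|_{\mathcal{B}_0,q} \;\leq\; \frac{C(p)}{n^{p}} \int_{0}^{\infty} \frac{\psi_{q}(\xi)}{(\xi+1/n)^{p+1}}\,d\xi.
\end{equation*}
The tail $\int_{1}^{\infty}(\xi+1/n)^{-(p+1)}\,d\xi = O(1)$ contributes $O(1/n^{p})$, consistent with each of the three claimed bounds. On $(0,1)$ we have $\psi_{q}(\xi) = \xi^{q}$; the substitution $\xi = y/n$ transforms the remaining contribution into $n^{-q}\int_{0}^{n} y^{q}(y+1)^{-(p+1)}\,dy$. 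The asymptotics of this last integral split into three regimes: it converges to the beta integral $B(q+1,p-q)$ when $p > q$, it behaves like $\log n$ when $p = q$, and it grows like $n^{q-p}/(q-p)$ when $p < q$. Dividing by $n^{q}$ and combining with the tail recovers the three cases of \eqref{eq:f_n_norm_poly}.

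\textbf{Main obstacle.} I expect the key difficulty to lie in the pointwise bound of Step~2. The precise $\xi$- and $n$-dependence $(\xi+1/n)^{-(p+1)}n^{-p}$ is sharp enough to balance $\psi_{q}(\xi) = \xi^{q}$ in all three regimes, and any weaker dependence would destroy both the logarithmic correction at $p = q$ and the $1/n^{p}$ rate in the range $p < q$. A secondary subtlety is that $\eta \mapsto |f_{n,p}'(\xi+i\eta)|$ is maximised at $\eta = 0$ when $\xi$ is of order $1$, but at an interior point with $\eta^{2}$ of order $(1+\xi)^{2}/(\xi n)$ when $\xi$ is small, so the derivation must be carried out uniformly across the two regimes, as in the change-of-variable calculation sketched above.
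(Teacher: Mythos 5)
Your proposal is correct and rests on the same basic mechanism as the paper's proof: differentiate $f_{n,p}$ explicitly, maximise $|f_{n,p}'(\xi+i\eta)|$ over $\eta$ for each fixed $\xi$ by locating the critical point in $s=\eta^2$, and then integrate against $\psi_q$ with a case split at the scale $\xi\sim 1/n$. The organisation, however, is genuinely different and arguably cleaner. The paper splits $f_{n,p}'$ into the two terms $2p(z-1)^n(z+1)^{-(n+2p+1)}$ and $2n(z-1)^{n-1}(z+1)^{-(n+2p+1)}$, runs a separate critical-point computation for each (the functions $g_{n,p}$ and $h_{n,p}$), and then integrates piecewise over $[0,\xi_0]$, $[\xi_0,1]$, $[1,\xi_1]$ and $[\xi_1,\infty)$ with $\xi_0\sim 1/n$, $\xi_1\sim n$, tracking several sub-cases in $p$. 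You instead keep the derivative as a single product, pass to the variable $u=4\xi/((\xi+1)^2+\eta^2)$, and compress everything into the single uniform bound $\sup_\eta|f_{n,p}'(\xi+i\eta)|\leq C(p)\,n^{-p}(\xi+1/n)^{-(p+1)}$, after which the rescaling $\xi=y/n$ reduces the three regimes of \eqref{eq:f_n_norm_poly} to the elementary asymptotics of $\int_0^n y^q(y+1)^{-(p+1)}\,dy$; I have checked that this pointwise bound is consistent with the paper's explicit evaluations $g_{n,p}(\xi,s_1)\sim n^{-(2p+1)}\xi^{-(2p+1)}$ and the corresponding estimate for $h_{n,p}$, and that the final integral reproduces all three cases. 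What your approach buys is a much shorter endgame and a transparent explanation of why the logarithm appears exactly at $p=q$. The one place where your sketch is loose is the pointwise bound itself: the prefactor is really $(n+p-p\xi)^2+p^2\eta^2$ rather than $n^2$ plus lower-order terms, so the contributions of $p^2\xi^2$ and $p^2\eta^2$ (using $\eta^2\leq 4\xi/u$) must be bounded separately, and for $\xi\gtrsim n$ the critical point $u^*\sim 1/n$ leaves the admissible interval $(0,4\xi/(\xi+1)^2]$ so the supremum sits at the endpoint $\eta=0$; both issues are routine to repair and do not affect the conclusion, but a complete write-up would need to carry them out, at which point the total amount of calculus is comparable to the paper's.
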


\begin{proof}
	Step~1: Let $n \in \mathbb{N}$. Fix $p >0$ and $ q  \in (0,1/2)$. 
	Since
	\[
	f_{n,2p}'(z) = - 2p \frac{(z-1)^n}{(z+1)^{n+2p+1}} + 2n \frac{(z-1)^{n-1}}{(z+1)^{n+2p+1}},
	\]
	we have
	\begin{equation}
		\label{eq:f_gh_bound_CT}
		\sup_{\eta \in \mathbb{R}} |f_{n,2p}'(\xi+i \eta )| \leq 
		2p \sup_{s \geq 0 }g_{n,p}(\xi,s) + 2n \sup_{s \geq 0 } h_{n,p}(\xi,s),
	\end{equation}
	where 
	\begin{align*}
		g_{n,p}(\xi,s) &\coloneqq 
		\frac{ \big((\xi-1)^2 + s\big)^{n/2}  }{ \big((\xi+1)^2 + s\big)^{(n+1)/2 + p} },\\
		h_{n,p}(\xi,s) &\coloneqq 
		\frac{ \big((\xi-1)^2 + s\big)^{(n-1)/2}  }{ \big((\xi+1)^2 + s\big)^{(n+1)/2+p} }
	\end{align*}
	for $\xi>0$ and $s \geq 0$.
	In Steps~2 and 3 below, we will show that 
	\begin{align}
		\int_0^{\infty}\psi_{ q }(\xi) \sup_{s \geq 0} g_{n,p}(\xi,s) d\xi
		&= 
		\begin{cases}
			O\left(
			\dfrac{1}{n^p}
			\right),& p \leq  q , \vspace{3pt}\\
			O\left(
			\dfrac{1}{n^{ q }}
			\right),& p >  q,
		\end{cases}
		\qquad (n \to \infty)
		\label{eq:g_n_estimate}
	\end{align}
	and
	\begin{align}
		\int_0^{\infty} \psi_{ q }(\xi) \sup_{s \geq 0} h_{n,p}(\xi,s) d\xi 
		&= 
		\begin{cases}
			O\left(
			\dfrac{1}{n^{p+1}}
			\right),& p< q , \vspace{3pt}\\
			O\left(
			\dfrac{\log n}{n^{p+1}}
			\right),& p= q ,\qquad (n \to \infty). \vspace{3pt}\\
			O\left(
			\dfrac{1}{n^{ q +1}}
			\right),& p> q,  \\
		\end{cases}
		\label{eq:h_n_estimate}
	\end{align}
	Combining the estimates \eqref{eq:f_gh_bound_CT}--\eqref{eq:h_n_estimate},
	we obtain the desired conclusion \eqref{eq:f_n_norm_poly}.
	
	Step~2: The aim of this step is to show the estimate \eqref{eq:g_n_estimate}
	for $g_{n,p}$.
	Fix $\xi > 0$, and 
	define 
	\[
	G_{\xi}(s) \coloneqq g_{n,p}(\xi,s)^2 =
	\frac{ \big((\xi-1)^2 + s\big)^n  }{ \big((\xi+1)^2 + s\big)^{n+ 2p+1} }
	\]
	for $s \geq 0$.
	Then
	\[
	G_{\xi}'(s) = \frac{\big((\xi-1)^2+s\big)^{n-1}}{\big((\xi+1)^2+s\big)^{n+2p+2}} 
	\chi (s),
	\]
	where
	\[
	\chi (s) \coloneqq n \big(
	(\xi+1)^2 + s
	\big)
	- (n+2p+1) \big((\xi -1)^2 + s \big).
	\]
	Since 
	\[
	\chi (s) =
	2(2p + 1 + 2n)\xi- (2p+1)(s+\xi^2+1),
	\]
	the equation $\chi (s) =0$ has the following solution:
	\[
	s_1 \coloneqq -\xi^2-1 + 2\left(
	1 + \frac{2n}{2p+1}
	\right)\xi.
	\]
	Moreover, 
	the solution $s_1$ satisfies
	$s_1 \geq 0$ if and only if
	$
	\xi_0
	\leq \xi \leq \xi_1,
	$
	where
	\begin{align*}
		\xi_0 &\coloneqq  \xi_0(n) \coloneqq 
		1+ \frac{2n}{2p+1} - 
		\frac{2\sqrt{n(n+2p+1)}}{2p+1}, \\
		\xi_1 &\coloneqq \xi_1(n) \coloneqq 
		1+ \frac{2n}{2p+1} +
		\frac{2\sqrt{n(n+2p+1)}}{2p+1}.
	\end{align*}
	Note that $\xi_1 >1$. Hence $\xi_0  = 1/\xi_1  < 1$.
	We also have
	\begin{equation}
		\label{eq:g_np_bound_CT}
		\sup_{s \geq 0} g_{n,p}(\xi,s) =
		\begin{cases}
			\displaystyle \sqrt{ G_{\xi} (s_1)}, &  \xi \in [\xi_0, \xi_1], \vspace{3pt}\\
			\displaystyle  \sqrt{ G_{\xi}(0)}, &  \xi \notin [\xi_0, \xi_1].
		\end{cases} 
	\end{equation}
	
	Since routine calculations show that 
	\begin{align*}
		(\xi-1)^2+s_1 &=\frac{4n}{2p+1}\xi, \\
		(\xi+1)^2+s_1 &=  \frac{4(n+2p+1)}{2p + 1} \xi,
	\end{align*}
	we obtain
	\begin{align*}
		G_{\xi}  (s_1) &= 
		\left(\frac{n}{n+2p+1}  \right)^{n}
		\left(  \frac{2p + 1} {4(n+2p+1)} \right)^{2p+1 }
		\frac{1}{\xi^{2p+1}}.
	\end{align*}
	Combining this with \eqref{eq:g_np_bound_CT}, we obtain
	\begin{align*}
		&\int_{\xi_0}^{\xi_1} \psi_{ q } (\xi) \sup_{s \geq 0} g_{n,p}(\xi,s) d\xi
		\\
		&\qquad =
		\left(\frac{n}{n+2p+1}  \right)^{n/2}
		\left(  \frac{2p + 1} {4(n+2p+1)} \right)^{p+1/2 }
		\int_{\xi_0}^{\xi_1} \frac{\psi_{ q } (\xi)}{\xi^{p+ 1/2}} d\xi.
	\end{align*}
	By the definition \eqref{eq:psi_def} of $\psi_{ q }$,
	\[
	\int_{\xi_0}^{\xi_1} \frac{\psi_{ q } (\xi)}{\xi^{p+ 1/2}} d\xi =
	\int_{\xi_0}^{1} \frac{1}{\xi^{p -  q  + 1/2}} d\xi + 
	\int_{1}^{\xi_1} \frac{1}{\xi^{p + 1/2}} d\xi.
	\]
	If $p \not=1/2$ and $p\not= q+1/2 $, then
	\begin{align*}
		\int_{\xi_0}^{\xi_1} \frac{\psi_{ q } (\xi)}{\xi^{p+ 1/2}} d\xi  =
		\left(
		\frac{1}{ q -p+1/2} - \frac{1}{1/2-p}
		\right) -
		\frac{\xi_0^{ q -p+1/2}}{q -p+1/2}
		+
		\frac{\xi_1^{1/2-p}}{1/2-p},
	\end{align*}
	and hence we have from $\xi_0(n) = O(n^{-1})$ and $\xi_1(n) = O(n)$ as $n \to \infty$
	that
	\begin{equation}
		\int_{\xi_0}^{\xi_1} \psi_{ q } (\xi) \sup_{s \geq 0} g_{n,p}(\xi,s) d\xi
		=
		\begin{cases}
			O\left(
			\dfrac{1}{n^{2p}}
			\right),& p < \dfrac{1}{2}, \vspace{3pt}\\
			O\left(
			\dfrac{1}{n^{p+1/2}}
			\right),& \dfrac{1}{2} < p <  q  + \dfrac{1}{2}, \vspace{3pt}\\
			O\left(
			\dfrac{1}{n^{ q +1}}
			\right),& p >  q  +\dfrac{1}{2}
		\end{cases}
		\label{eq:gnp_poly1}
	\end{equation}
	as $n \to \infty$. Similarly,
	if $p = 1/2$ or $p=  q  + 1/2$, then
	\begin{equation}
		\label{eq:gnp_poly2}
		\int_{\xi_0}^{\xi_1} \psi_{ q } (\xi) \sup_{s \geq 0} g_{n,p}(\xi,s) d\xi = 
		O\left(
		\frac{\log n}{n^{p+1/2}}
		\right)
	\end{equation}
	as $n \to \infty$.
	On the other hand, we derive from \eqref{eq:g_np_bound_CT} that
	\begin{align*}
		\int_{\xi_1}^{\infty} \psi_{ q }(\xi)  \sup_{s \geq 0} g_{n,p}(\xi,s) d\xi 
		= \int_{\xi_1}^{\infty}  \frac{(\xi-1)^n}{(\xi+1)^{n+2p+1}} d\xi
		\leq \int_{\xi_1}^{\infty}  \frac{1}{\xi^{2p+1}} d\xi
	\end{align*}
	and
	\begin{align*}
		\int_{0}^{\xi_0} \psi_{ q }(\xi)  \sup_{s \geq 0} g_{n,p}(\xi,s) d\xi 
		= \int_{0}^{\xi_0}  \frac{\xi^{ q  }(1-\xi)^n}{(\xi+1)^{n+2p+1}} d\xi
		\leq \int_{0}^{\xi_0} \xi^{ q  }d\xi .
	\end{align*}
	Therefore,
	\begin{equation}
		\label{eq:gnp_poly3}
		\int_{\xi_1}^{\infty} \psi_{ q }(\xi)  \sup_{s \geq 0} g_{n,p}(\xi,s) d\xi 
		= O\left(
		\frac{1}{n^{2p}}
		\right)
	\end{equation}
	and
	\begin{equation}
		\label{eq:gnp_poly4}
		\int_{0}^{\xi_0} \psi_{ q }(\xi)  \sup_{s \geq 0} g_{n,p}(\xi,s) d\xi =
		O\left(
		\frac{1}{n^{ q +1}}
		\right)
	\end{equation}
	as $n \to \infty$.
	Thus, the estimate \eqref{eq:g_n_estimate} for $g_{n,p}$ holds, and it
	is actually less sharp than the estimate obtained from the above argument.
	
	Step~3:
	We immediately obtain the estimate \eqref{eq:h_n_estimate} for $h_{n,p}$,
	by replacing 
	$n$ with $n-1$ and $p$ with $p+1/2$ in 
	the estimates~\eqref{eq:gnp_poly1}--\eqref{eq:gnp_poly4}.
\end{proof}

We are well prepared to obtain the estimate \eqref{eq:CT_bound}
for $\|V(A)^n A^{-\alpha}\|$.
\begin{proof}[Proof of Theorem~\ref{thm:CT_bound_poly}]
	Let $p \in (0,1/2)$
	and define the function $f_{n,2p}$
	by \eqref{eq:fn_CT_poly} for $n \in \mathbb{N}$.
	By Example~\ref{ex:bound_poly} in the case $\alpha = 2p$, $c=1$, $q = p$, and
	\[
	f(z) = \left( \frac{z-1}{z+1} \right)^n,
	\]
	there exists a constant $M>0$ such that 
	\[
	\|V(A)^n A^{-(2+\beta)p}\| \leq M \|f_{n,2p}\|_{\mathcal{B}_0,p}
	\]
	for all $n \in \mathbb{N}$.
	From  the estimate \eqref{eq:f_n_norm_poly}
	with $p =  q$, we have 
	\begin{equation}
		\label{eq:CT_p}
		\|V(A)^n A^{-(2+\beta)p}\| = O\left(
		\frac{\log n}{n^p}
		\right)
	\end{equation}
	as $n \to \infty$. Hence,  the desired estimate \eqref{eq:CT_bound}
	holds in the case $k = \lfloor 2\alpha/(2+\beta) \rfloor =0$.
	
	Next we consider the case $k = \lfloor 2\alpha/(2+\beta) \rfloor \geq 1$. To this end,
	choose $\alpha \geq (2+\beta)/2$ arbitrarily. Let  $k \in \mathbb{N}$ and
	$0 \leq \delta_0 < (2+\beta)/2$
	satisfy
	\[
	\alpha = \frac{k(2+\beta)}{2} + \delta_0.
	\]
	Then there exists $\varepsilon >0$ such that 
	\[
	0 < \delta \coloneqq \delta_0 + \frac{\varepsilon k}{2} < \frac{2+\beta}{2}.
	\]
	By definition, 
	\[
	\alpha = k \gamma+ \delta, \quad \text{where~}
	\gamma  \coloneqq \frac{2+\beta - \varepsilon}{2},
	\]
	and hence
	\begin{align*}
		\|V(A)^n A^{-\alpha}\|
		&\leq
		\left( \max_{0\leq \ell \leq k} \|V(A)^{\ell}\|\right)
		\|V(A)^{\lfloor n/(k+1) \rfloor } A^{-\gamma } \|^k \,
		\|V(A)^{\lfloor n/(k+1) \rfloor } A^{-\delta } \|
	\end{align*}
	for all $n \in \mathbb{N}$.
	Using the estimate \eqref{eq:CT_p}, we obtain
	\begin{align*}
		\|V(A)^{\lfloor n/(k+1) \rfloor } A^{-\gamma } \|=
		O\left(
		\frac{\log n}{n^{\gamma/(2+\beta)} } 
		\right)
	\end{align*}
	and
	\begin{align*}
		\|V(A)^{\lfloor n/(k+1) \rfloor } A^{-\delta } \| = 
		O\left(
		\frac{\log n}{n^{\delta/(2+\beta)}} 
		\right)
	\end{align*}
	as $n \to \infty$.
	Thus, the desired estimate \eqref{eq:CT_bound} holds.
\end{proof}

\begin{remark}
	\label{rem:p_gamma}
	In the proof of Theorem~\ref{thm:CT_bound_poly},
	we have used the estimate  \eqref{eq:f_n_norm_poly}
	with $p =  q $ for $\|f_{n,2p}\|_{\mathcal{B}_0, q }$.
	This is because the estimate  \eqref{eq:f_n_norm_poly}
	with $p \not=  q $ yields a worse decay rate.
	To see this, 
	assume that the $C_0$-semigroup 
	$(e^{-tA})_{t \geq 0}$ is polynomially stable 
	with parameter $\beta >0$.
	The estimate  \eqref{eq:f_n_norm_poly} with $p <  q \, (< 1/2) $ gives
	\begin{equation}
		\label{eq:p_smaller_gamma}
		\|V(A)^n A^{-2p-\beta q }\| = O\left(
		\frac{1}{n^{p}}
		\right)\qquad (n \to \infty).
	\end{equation}
	If $2p+\beta  q  = 1$ and $p <  q $, then
	\[
	p < \frac{1}{2+\beta}.
	\]
	Hence
	the estimate \eqref{eq:p_smaller_gamma} with $2p+\beta  q  = 1$  is worse than 
	the estimate \eqref{eq:CT_bound} with $\alpha = 1$. A similar argument can be applied to the estimate  \eqref{eq:f_n_norm_poly} with  $p >  q $.
\end{remark}

\subsection{Case of variable parameters}
Here we study the case where the parameter $\omega$ of the Cayley transform $V_{\omega}(A)$ 
varies.
First we improve the previous estimate developed in 
\cite[Theorem~3.3]{Wakaiki2023IEOT}
for exponentially stable $C_0$-semigroups.
Next we consider polynomially stable $C_0$-semigroups with normal generators
and present a similar improved estimate over the one given in 
\cite[Proposition~3.6]{Wakaiki2023IEOT}.
\subsubsection{Exponentially stable semigroups}
When the $C_0$-semigroup $(e^{-tA})_{t \geq 0}$ is exponentially stable,
we give an estimate for $\| (\prod_{k=1}^n V_{\omega_k}(A) ) A^{-\alpha} \|$
without assuming the boundedness of $(e^{-tA^{-1}})_{t \geq 0}$.
Before that,
we define the function $F_{\alpha}$ on $\mathbb{N}$ by
\begin{equation}
	\label{eq:Fa_def}
	F_{\alpha} (n) \coloneqq
	\begin{cases}
		\log n, & \alpha = 0, \vspace{3pt}\\
		\dfrac{1}{n^{\alpha/2}}, & \alpha >0.
	\end{cases}
\end{equation}
Recall  that 
for $0<\omega_p\leq \omega_q < \infty$,
we denote by $\mathcal{S}(\omega_p,\omega_q)$
the set of 
sequences $(\omega_k)_{k \in \mathbb{N}}$ of positive real numbers
satisfying $
\omega_p\leq \omega_k \leq 
\omega_q$
for all $k \in \mathbb{N}$.
\begin{theorem}
	\label{thm:CT_exp_variable}
	Let $-A$ be the generator of an exponentially stable $C_0$-semi\-group
	on a Hilbert space $H$. Then,
	for all $\alpha \geq 0$ and $0< \omega_p \leq \omega_q < \infty$,
	there exist constants $M>0$ and $n_0 \in \mathbb{N}$ such that 
	\begin{equation}
		\label{eq:VA_exp_variable}
		\left\| \left(\prod_{k=1}^n V_{\omega_k}(A)\right) A^{-\alpha}  \right\| \leq M 
		F_{\alpha}(n)
	\end{equation}
	for all $n \geq n_0$ and $(\omega_k)_{k \in \mathbb{N}} \in 
	\mathcal{S}(\omega_p,\omega_q)$,
	where the function $F_{\alpha}$ is as in \eqref{eq:Fa_def}.
\end{theorem}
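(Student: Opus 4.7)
The plan is to recognize the product as the image of a single holomorphic function under the $\mathcal{B}$-calculus of a shifted generator, and then bound the resulting $\mathcal{B}$-norm. Because $(e^{-tA})_{t\ge 0}$ is exponentially stable, choose $\varepsilon>0$ and $K_0>0$ with $\|e^{-tA}\|\le K_0 e^{-\varepsilon t}$, and set $B\coloneqq A-\varepsilon I$. Then $-B$ generates the bounded $C_0$-semigroup $(e^{\varepsilon t}e^{-tA})_{t\ge 0}$ on $H$, so the $\mathcal{B}$-calculus $\Phi_B$ is available together with the estimate \eqref{eq:fA_bound_Bnorm}. Writing $c_k\coloneqq \varepsilon+\omega_k$ and $d_k\coloneqq \varepsilon-\omega_k$, one has $A\pm\omega_k I = B + c_k I$ or $B+d_k I$ and $A^{-\alpha}=(B+\varepsilon I)^{-\alpha}$. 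By Example~\ref{ex:LM_calculus}(a)--(b), the operator $(\prod_{k=1}^n V_{\omega_k}(A))A^{-\alpha}$ equals $G(B)$, where
\[
G(z)\coloneqq \prod_{k=1}^n \frac{z+d_k}{z+c_k}\cdot\frac{1}{(z+\varepsilon)^{\alpha}},\qquad z\in\mathbb{C}_+.
\]
Consequently \eqref{eq:fA_bound_Bnorm} gives $\|(\prod V_{\omega_k}(A))A^{-\alpha}\|\le 2K^2\|G\|_{\mathcal{B}}$, with $K=\sup_t\|e^{-tB}\|$, and it suffices to bound $\|G\|_{\mathcal{B}}=\|G\|_\infty+\|G\|_{\mathcal{B}_0}$ uniformly in $(\omega_k)\in\mathcal{S}(\omega_p,\omega_q)$.

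The sup-norm piece is immediate: since $|z+c_k|^2-|z+d_k|^2=4\omega_k\re(z+\varepsilon)>0$ on $\mathbb{C}_+$, each Blaschke-type factor has modulus at most $1$, and $|z+\varepsilon|\ge\varepsilon$ gives $\|G\|_\infty\le\varepsilon^{-\alpha}$; moreover $G(\infty)=0$ when $\alpha>0$ and $G(\infty)=1$ when $\alpha=0$. The essential work is the $\mathcal{B}_0$-norm. Starting from the logarithmic derivative
\[
\frac{G'(z)}{G(z)}=\sum_{k=1}^n\frac{2\omega_k}{(z+d_k)(z+c_k)}-\frac{\alpha}{z+\varepsilon},
\]
and the product estimate
\[
|G(\xi+i\eta)|^2\le \exp\!\Bigl(-\tfrac{4n\omega_p(\xi+\varepsilon)}{(\xi+\varepsilon+\omega_q)^2+\eta^2}\Bigr)\cdot\bigl((\xi+\varepsilon)^2+\eta^2\bigr)^{-\alpha},
\]
obtained from $|(z+d_k)/(z+c_k)|^2=1-4\omega_k(\xi+\varepsilon)/((\xi+c_k)^2+\eta^2)\le\exp(\cdot)$, I would optimise over $\eta$. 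The optimum sits near $|\eta|\asymp\sqrt{n(\xi+\varepsilon)}$: the exponential factor is then of order $e^{-c}$, while the algebraic factor contributes $(n(\xi+\varepsilon))^{-\alpha/2}$. A parallel analysis of the terms $2\omega_k/((z+d_k)(z+c_k))$ (bounded below like $(\xi+\varepsilon)\sqrt{(\xi+\varepsilon+\omega_q)^2+\eta^2}$) and of $\alpha/(z+\varepsilon)$, combined with this $|G|$ bound, yields $\sup_\eta|G'(\xi+i\eta)|\le C_\alpha n^{-(\alpha+1)/2}\cdot\Theta_\alpha(\xi)$ with $\Theta_\alpha$ integrable on $(0,\infty)$ after splitting into small, moderate, and large $\xi$; the case $\alpha=0$ develops an additional $\log n$ coming from the critical integration scale.

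Integrating gives $\|G\|_{\mathcal{B}_0}\le M F_\alpha(n)$ uniformly in $(\omega_k)\in\mathcal{S}(\omega_p,\omega_q)$, and combining with $\|G\|_\infty\le\varepsilon^{-\alpha}$ and \eqref{eq:fA_bound_Bnorm} proves \eqref{eq:VA_exp_variable}. The main obstacle is Step on $\|G\|_{\mathcal{B}_0}$: the logarithmic-derivative factor has simple poles on $\overline{\mathbb{C}_+}$ at the zeros $z=\omega_k-\varepsilon$ of $G$ (possibly inside $\mathbb{C}_+$ when $\omega_k>\varepsilon$), and these apparent singularities must be shown to cancel against the Blaschke-product decay of $|G|$; also the estimate must be uniform in the arbitrary sequence $(\omega_k)\in\mathcal{S}(\omega_p,\omega_q)$, so bounds such as $\omega_k\ge\omega_p$ and $c_k\le\varepsilon+\omega_q$ have to be used symmetrically throughout the splitting of the $\xi$-integral.
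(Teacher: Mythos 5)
Your reduction is the same as the paper's: shift $B\coloneqq A-\varepsilon I$ so that $-B$ generates a bounded semigroup, write $(\prod_{k}V_{\omega_k}(A))A^{-\alpha}$ as $G(B)$ for an explicit $\mathcal{LM}$-function $G$, and invoke \eqref{eq:fA_bound_Bnorm}. Your observation that each factor satisfies $|(z+d_k)/(z+c_k)|^2=1-4\omega_k(\xi+\varepsilon)/|z+c_k|^2\le\exp\bigl(-4\omega_p(\xi+\varepsilon)/((\xi+\varepsilon+\omega_q)^2+\eta^2)\bigr)$ is a clean alternative to the paper's Lemma~\ref{lem:each_bound}, which instead reduces the variable-parameter product to the constant case $\omega_k\equiv\omega_p$ under the normalization $c^2\ge\omega_p\omega_q$. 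However, the core of the argument --- the uniform bound $\|G\|_{\mathcal{B}_0}\le MF_\alpha(n)$, which is Proposition~\ref{prop:fn_exp} together with Lemma~\ref{lem:g_h_bound} in the paper --- is not actually established, and the intermediate estimate you do assert is quantitatively impossible.

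Two concrete problems. First, the factorization $|G'|\le|G|\,|G'/G|$ combined with your exponential upper bound on $|G|$ fails near the zeros $z=\omega_k-\varepsilon\in\mathbb{C}_+$ of $G$ (which occur whenever $\omega_k>\varepsilon$): the exponential bound does not vanish there, while $|G'/G|$ blows up, so the product of the two bounds is infinite even though $G'$ itself is finite. You flag this as ``the main obstacle'' but never resolve it; the resolution is to cancel the offending factor $(z+d_\ell)$ algebraically, i.e.\ to return to the product-rule expansion $G'=\sum_{\ell}\frac{2\omega_\ell}{(z+c_\ell)^2}\prod_{k\ne\ell}\frac{z+d_k}{z+c_k}(z+\varepsilon)^{-\alpha}-\alpha(z+\varepsilon)^{-\alpha-1}\prod_{k}\frac{z+d_k}{z+c_k}$, which is exactly what the paper estimates via the functions $g_{n,\alpha}$ and $h_{n,\alpha}$. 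Second, your claimed pointwise bound $\sup_{\eta}|G'(\xi+i\eta)|\le C_\alpha n^{-(\alpha+1)/2}\Theta_\alpha(\xi)$ with $\Theta_\alpha$ integrable would yield $\|G\|_{\mathcal{B}_0}=O(n^{-(\alpha+1)/2})$, contradicting the lower bound $\|G\|_{\mathcal{B}_0}\ge\|G\|_\infty\ge|G(i\sqrt{n})|\ge M_0n^{-\alpha/2}$ obtained exactly as in Remark~\ref{rem:optimality1}; the true rate $n^{-\alpha/2}$ emerges only from the $\xi$-integration, in which the sum of $n$ terms $\sum_{\ell}2\omega_\ell/|(z+d_\ell)(z+c_\ell)|$ contributes a factor of $n$ that is only partially absorbed. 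The promised ``splitting into small, moderate, and large $\xi$'' --- where this bookkeeping must be done and where the logarithmic borderline cases of Lemma~\ref{lem:g_h_bound} arise --- is left entirely unexecuted, so the proof is incomplete at precisely the point where the work lies.
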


When $(e^{-tA})_{t \geq 0}$ is an exponentially stable $C_0$-semigroup,
$-A + cI$ generates a bounded $C_0$-semigroup for some sufficiently small constant $c>0$.
To prove Theorem~\ref{thm:CT_exp_variable} by means of the $\mathcal{B}$-calculus, 
we consider the function $f_{n,\alpha,(\omega_k)}$ defined by
\begin{equation}
	\label{eq:fn_def_exp_variable}
	f_{n,\alpha,(\omega_k)}(z) 
	\coloneqq
	\frac{1}{(z+\omega_p+c)^{\alpha}}
	\prod_{k=1}^n
	\frac{z-\omega_k+c}{z+\omega_k+c},\quad z \in \mathbb{C}_+,
\end{equation}
where $0 < \omega_p \leq \omega_q < \infty$ and $(\omega_k)_{k \in \mathbb{N}} \in 
\mathcal{S}(\omega_p,\omega_q)$.
As seen in Example~\ref{ex:LM_functions},
we have $f_{n,\alpha,(\omega_k)} \in \mathcal{LM}$.
The following result gives an estimate for $\|f_{n,\alpha,(\omega_k)}\|_{\mathcal{B}_0}$.
\begin{proposition}
	\label{prop:fn_exp}
	Let $\alpha \geq 0$,
	$c> 0$, and $0 < \omega_p \leq \omega_q < \infty$. Then
	there exist constants $M>0$ and $n_0 \in \mathbb{N}$ such that 
	the function $f_{n,\alpha,(\omega_k)} $ defined by \eqref{eq:fn_def_exp_variable} 
	satisfies
	\begin{equation}
		\label{eq:f_n_norm}
		\|f_{n,\alpha,(\omega_k)}\|_{\mathcal{B}_0} \leq MF_{\alpha}(n)
	\end{equation}
	for all $n \geq n_0$ and $(\omega_k)_{k \in \mathbb{N}} \in 
	\mathcal{S}(\omega_p,\omega_q)$,
	where the function $F_{\alpha}$ is as in \eqref{eq:Fa_def}.
\end{proposition}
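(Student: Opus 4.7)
The plan is to bound $\|f_{n,\alpha,(\omega_k)}\|_{\mathcal{B}_0}$ by direct computation, following closely the style of the proof of Proposition~\ref{prop:f_n_norm}. Setting $g_k(z) \coloneqq (z+c-\omega_k)/(z+c+\omega_k)$, $B_n \coloneqq \prod_{k=1}^n g_k$, and $B_{n,k} \coloneqq \prod_{j\neq k} g_j$, logarithmic differentiation yields
\begin{equation*}
f_{n,\alpha,(\omega_k)}'(z) = -\frac{\alpha\, B_n(z)}{(z+\omega_p+c)^{\alpha+1}} + \frac{1}{(z+\omega_p+c)^{\alpha}} \sum_{k=1}^n \frac{2\omega_k\, B_{n,k}(z)}{(z+c+\omega_k)^2}.
\end{equation*}

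The crucial ingredient is a Blaschke-type exponential decay estimate. Using $\log(1-x) \leq -x$ for $x \in [0,1)$ together with the uniform bounds $\omega_p \leq \omega_k \leq \omega_q$, I establish
\begin{equation*}
|B_n(\xi+i\eta)|^2 = \prod_{k=1}^n \left(1 - \frac{4\omega_k(\xi+c)}{(\xi+c+\omega_k)^2+\eta^2}\right) \leq \exp\!\left(-\frac{4n\omega_p(\xi+c)}{(\xi+c+\omega_q)^2+\eta^2}\right),
\end{equation*}
with an analogous bound for $|B_{n,k}|$ (with $n$ replaced by $n-1$). Combining this with $|z+c+\omega_k| \geq |z+\omega_p+c|$ and $\omega_k \leq \omega_q$ to majorize the sum, I obtain
\begin{equation*}
|f_{n,\alpha,(\omega_k)}'(\xi+i\eta)| \leq \left[\frac{\alpha}{\bigl((\xi+\omega_p+c)^2+\eta^2\bigr)^{(\alpha+1)/2}} + \frac{2n\omega_q}{\bigl((\xi+\omega_p+c)^2+\eta^2\bigr)^{(\alpha+2)/2}}\right] \mathcal{E}(\xi,\eta),
\end{equation*}
where $\mathcal{E}(\xi,\eta) \coloneqq \exp\bigl(-2(n-1)\omega_p(\xi+c)/((\xi+c+\omega_q)^2+\eta^2)\bigr)$.

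Next I take $\sup_\eta$ by optimizing each summand in $s = \eta^2 \geq 0$, exactly as in Step~2 of the proof of Proposition~\ref{prop:f_n_norm}. Differentiation in $s$ identifies an interior critical point $s^* \sim n(\xi+c)$ at which $\mathcal{E}$ contributes an $O(1)$ factor and the polynomial part scales as $(n(\xi+c))^{-(\alpha+j)/2}$ for $j \in \{1,2\}$; for $\xi \gtrsim n$ the critical point collapses to $s=0$. Integrating in $\xi$, the dominant contribution from the $j=2$ term yields $O\bigl(n^{-\alpha/2}\int_0^\infty(\xi+c)^{-(\alpha+2)/2}\,d\xi\bigr) = O(n^{-\alpha/2})$ when $\alpha>0$, with a tail $\int_n^\infty n\xi^{-\alpha-2}\,d\xi = O(n^{-\alpha})$ that is dominated. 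For $\alpha=0$ the same analysis reduces to $\int_0^n d\xi/(\xi+c) = O(\log n)$ with a tail $\int_n^\infty n\xi^{-2}\,d\xi = O(1)$. The $j=1$ term is of lower order (at worst $O(n^{-\alpha})$ up to a logarithm) for $\alpha>0$ and vanishes identically for $\alpha=0$.

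All constants above depend only on $c,\omega_p,\omega_q,\alpha$, so the resulting estimate is uniform over $(\omega_k) \in \mathcal{S}(\omega_p,\omega_q)$. The hypothesis $c^2 \geq \omega_p\omega_q$ (which in particular forces $c \geq \omega_p$, since $c^2 \geq \omega_p^2$) is used to keep the majorizations consistent and to ensure the exponent in $\mathcal{E}$ remains effectively negative throughout the relevant range; its precise role is to be verified during the computation. The main technical obstacle is the $\alpha=0$ case, in which the logarithmic factor arises delicately from the competition between the scale-invariant integrand $1/(\xi+c)$ and the exponential cutoff scale $\xi \sim n$ induced by $\mathcal{E}$, requiring careful tracking of the $\eta$-optimization in the crossover regime $\xi \sim n$ uniformly in $(\omega_k)$.
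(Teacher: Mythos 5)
Your proposal is correct in outline but takes a genuinely different route at the key step. The paper handles the variable parameters by first proving a monotonicity lemma (Lemma~\ref{lem:each_bound}): for each fixed $\xi>0$ and $s=\eta^2\geq 0$ the quantity $\bigl((\xi-\omega+c)^2+s\bigr)/\bigl((\xi+\omega+c)^2+s\bigr)$ is maximized over $\omega\in[\omega_p,\omega_q]$ at $\omega=\omega_p$ --- this is precisely where the hypothesis $c^2\geq\omega_p\omega_q$ enters --- so that $|f_{n,\alpha,(\omega_k)}'|$ is majorized by the derivative data of the \emph{constant-parameter} function with $\omega\equiv\omega_p$; it then computes $\sup_{s}$ of the resulting rational expressions exactly (locating the critical point $s_1$ and evaluating $g_{n,\alpha}(\xi,s_1)$ in closed form) and integrates in $\xi$ (Lemma~\ref{lem:g_h_bound}). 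You instead majorize the Blaschke-type product by an exponential via $1-x\leq e^{-x}$ together with the crude bounds $\omega_p\leq\omega_k\leq\omega_q$ in numerator and denominator separately. This is a valid alternative, and it settles the question you left open: in your argument the hypothesis $c^2\geq\omega_p\omega_q$ is not needed at all (it is only required for the paper's monotonicity lemma), so your route in fact proves the statement under weaker assumptions. Your subsequent $\sup_{\eta}$-optimization and $\xi$-integration are only sketched, but the claimed scalings (critical point $s^*\sim n(\xi+c)$, maximum value $\sim(n(\xi+c))^{-(\alpha+j)/2}$, crossover to $s=0$ for $\xi\gtrsim n$, and the resulting $O(n^{-\alpha/2})$ for $\alpha>0$ and $O(\log n)$ for $\alpha=0$) all check out and agree with the exact computation in Lemma~\ref{lem:g_h_bound}; the mismatch between the quadratic $(\xi+\omega_p+c)^2+s$ in your polynomial prefactor and $(\xi+c+\omega_q)^2+s$ in the exponential is harmless since the two are uniformly comparable. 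What the paper's approach buys is fully explicit constants and closed-form maxima; what yours buys is the elimination of Lemma~\ref{lem:each_bound} and of the hypothesis $c^2\geq\omega_p\omega_q$, at the cost of having to redo the one-variable optimization for an exponential rather than a rational majorant.
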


The difficulty in estimating
$\|f_{n,\alpha,(\omega_k)}\|_{\mathcal{B}_0}$
is that the parameter $\omega_k$ can vary on $[\omega_p,\omega_q]$,
which complicates the computation of 
$
\sup_{\eta \in \mathbb{R}} |f_{n,\alpha,(\omega_k)}'(\xi+i \eta)|.
$
To circumvent this difficulty, we show that the function on $[\omega_p,\omega_q]$ defined by
\[
\omega \mapsto \left|
\frac{(\xi -\omega+c) + i \eta}{(\xi +\omega+c)+i \eta}
\right|
\]
takes the maximum value at $\omega = \omega_p$ for all $\xi > 0$ and $\eta \in \mathbb{R}$ under a suitable condition on the constant $c$.

\begin{lemma}
	\label{lem:each_bound}
	Let 
	$c> 0$ and $0 < \omega_p \leq \omega_q < \infty$
	satisfy  $c^2 \geq \omega_p \omega_q$. 
	If $\omega_p \leq \omega \leq \omega_q$, then
	\[
	\frac{(\xi-\omega+c)^2+s}{(\xi+\omega+c)^2+s} \leq 
	\frac{(\xi-\omega_p+c)^2+s}{(\xi+\omega_p+c)^2+s} 
	\] 
	for all $\xi >0$ and $s \geq 0$. 
\end{lemma}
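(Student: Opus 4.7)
The plan is to reduce the inequality to a single polynomial inequality by cross-multiplying (both denominators are strictly positive), expand, and exploit the hypothesis $c^2 \geq \omega_p\omega_q$ at the very end. Setting $a \coloneqq \xi + c > 0$, the claim becomes
\[
[(a-\omega)^2 + s][(a+\omega_p)^2 + s] \;\leq\; [(a-\omega_p)^2 + s][(a+\omega)^2 + s].
\]

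First I would split the difference of the two sides according to powers of $s$. The $s^2$ terms cancel automatically. The $s^1$ contributions reduce, via $(a\pm\omega)^2 - (a\mp\omega)^2 = \pm 4a\omega$ and the analogous identities for $\omega_p$, to $4as(\omega - \omega_p)$. For the $s^0$ term I would use the difference-of-squares factorization
\[
(a-\omega_p)^2(a+\omega)^2 - (a-\omega)^2(a+\omega_p)^2
= \bigl[(a-\omega_p)(a+\omega) - (a-\omega)(a+\omega_p)\bigr]\bigl[(a-\omega_p)(a+\omega) + (a-\omega)(a+\omega_p)\bigr],
\]
and check by direct expansion that the first bracket equals $2a(\omega - \omega_p)$ and the second equals $2(a^2 - \omega\omega_p)$. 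Assembling everything, the right-hand side minus the left-hand side collapses to
\[
4a(\omega - \omega_p)\bigl(a^2 + s - \omega \omega_p\bigr).
\]

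Since $a > 0$ and $\omega \geq \omega_p$, the sign is dictated by the last factor. This is where the hypothesis enters decisively: because $\xi > 0$, one has $a = \xi + c > c$, hence $a^2 > c^2 \geq \omega_p \omega_q \geq \omega_p \omega$, so $a^2 + s - \omega\omega_p > 0$ for every $s \geq 0$. The desired inequality follows.

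The only real subtlety is the placement of the hypothesis $c^2 \geq \omega_p \omega_q$: a naive monotonicity-in-$\omega$ argument via differentiation fails, since for small $\xi$ and small $s$ the function $\omega \mapsto \frac{(a-\omega)^2+s}{(a+\omega)^2+s}$ need not be decreasing throughout $[\omega_p,\omega_q]$. The cross-multiplication route sidesteps this by turning the comparison into a single factorized expression in which the hypothesis is exactly what is needed to fix the sign of the final factor. Beyond that, the remaining steps are routine algebra and no further obstacle is expected.
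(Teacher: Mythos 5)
Your proof is correct, and it takes a genuinely more direct route than the paper's. The paper argues in two stages: it first differentiates $\phi_{\zeta,s}(\omega) = \frac{(\zeta-\omega)^2+s}{(\zeta+\omega)^2+s}$ in $\omega$, finds the unique critical point $\omega_0=\sqrt{\zeta^2+s}$, concludes that $\phi_{\zeta,s}$ decreases then increases so that its maximum over $[\omega_p,\omega_q]$ is attained at an endpoint, and only then compares the two endpoints by showing $\phi_{\zeta,s}(\omega_p)\ge\phi_{\zeta,s}(\omega_q)$ via essentially the same cross-multiplied expression you derive (the paper's $v_{\zeta,s}$ is your factorization specialized to $\omega=\omega_q$). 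You instead compare $\phi_{\zeta,s}(\omega)$ with $\phi_{\zeta,s}(\omega_p)$ directly for every $\omega\in[\omega_p,\omega_q]$, obtaining the single factorized difference $4a(\omega-\omega_p)(a^2+s-\omega\omega_p)$ whose sign is settled by $a^2>c^2\ge\omega_p\omega_q\ge\omega_p\omega$; this absorbs the paper's calculus step entirely and is arguably cleaner, at the mild cost of needing the slightly stronger observation that $c^2\ge\omega_p\omega_q$ controls $\omega_p\omega$ for \emph{all} $\omega$ in the interval, not just the right endpoint. Your closing remark is also accurate: the paper does not use a naive monotonicity argument precisely because $\phi_{\zeta,s}$ need not be monotone on $[\omega_p,\omega_q]$, which is why it routes through the max-at-an-endpoint observation. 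Both proofs are sound; yours is shorter and purely algebraic.
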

\begin{proof}
	Fix $\xi >0$ and $s \geq 0$.  Set $\zeta \coloneqq \xi + c$. Define
	\[
	\phi_{\zeta,s}(\omega) \coloneqq \frac{(\zeta-\omega)^2+s}{(\zeta+\omega)^2+s},
	\quad \omega >0.
	\]
	Then
	\[
	\phi_{\zeta,s}'(\omega) = \frac{4\zeta(
		\omega^2 - \zeta^2-s 
		)}{\big((\zeta+\omega)^2+s\big)^2}.
	\]
	Therefore, 
	$\omega_0 \coloneqq \sqrt{\zeta^2+s}$ satisfies $\phi_{\zeta,s}'(\omega_0) = 0$,
	and $\phi_{\zeta,s}$ decreases on the interval $(0,\omega_0)$
	and increases on the interval $(\omega_0,\infty)$.
	This implies that for all $\omega
	\in [\omega_p, \omega_q]$,
	\[
	\phi_{\zeta,s} (\omega) \leq 
	\max\{
	\phi_{\zeta,s} (\omega_p),\,\phi_{\zeta,s} (\omega_q)
	\}.
	\]
	
	It remains to show that if $c^2 \geq \omega_p \omega_q$, 
	then
	\begin{equation}
		\label{eq:min_max_estimate}
		\phi_{\zeta,s} (\omega_p) \geq \phi_{\zeta,s} (\omega_q).
	\end{equation}
	This can be proved by a direct calculation.
	We have
	\[
	\phi_{\zeta,s} (\omega_p) - \phi_{\zeta,s} (\omega_q) =
	\frac{
		\chi_{\zeta,s}
	}{ \big((\zeta+\omega_p\big)^2 + s)\big((\zeta+\omega_q)^2 + s\big)},
	\]
	where
	\[
	\chi_{\zeta,s} \coloneqq
	\big((\zeta-\omega_p)^2 + s\big)\big((\zeta+\omega_q)^2 + s\big) - 
	\big((\zeta-\omega_q)^2 + s\big) \big((\zeta+\omega_p)^2 + s\big).
	\]
	One can rewrite $\chi_{\zeta,s}$ as
	\begin{align*}
		\chi_{\zeta,s} &= 
		\big((\zeta-\omega_p)^2(\zeta+\omega_q)^2 -
		(\zeta-\omega_q)^2(\zeta+\omega_p)^2\big) \\
		&\qquad +
		\big(
		(\zeta-\omega_p)^2 + (\zeta+\omega_q)^2 -
		(\zeta-\omega_q)^2 - (\zeta+\omega_p)^2
		\big)s.
	\end{align*}
	The coefficient of $s$ satisfies
	\begin{align*}
		(\zeta-\omega_p)^2 + (\zeta+\omega_q)^2 -
		(\zeta-\omega_q)^2 - (\zeta+\omega_p)^2
		=
		4(\omega_q - \omega_p) \zeta \geq 0.
	\end{align*}
	Moreover, a routine calculation shows that 
	\begin{align*}
		(\zeta-\omega_p)^2(\zeta+\omega_q)^2 -
		(\zeta-\omega_q)^2(\zeta+\omega_p)^2 = 4(\omega_q - \omega_p)
		\zeta (\zeta^2 - \omega_p\omega_q) .
	\end{align*}
	Substituting $\zeta = \xi + c$, we have from the condition $c^2 \geq \omega_p \omega_q$ that 
	\[
	\zeta^2 - \omega_p\omega_q = \xi^2+2c \xi + (c^2 - \omega_p\omega_q) \geq 0.
	\]
	Thus, 
	the inequality \eqref{eq:min_max_estimate} holds.
\end{proof}

Next we give auxiliary estimates to be used 
after Lemma~\ref{lem:each_bound} is applied.
\begin{lemma}
	\label{lem:g_h_bound}
	Let $n \in \mathbb{N}$, $\alpha \geq 0$, and $\omega,c >0$.
	Define
	\begin{align}
		g_{n,\alpha,\omega}(\xi,s) &\coloneqq \frac{\big((\xi-\omega + c )^2+s \big)^{n/2}}{\big((\xi+\omega + c)^2 + s\big)^{(n+\alpha+1)/2}},
		\label{eq:g_naw}\\
		h_{n,\alpha,\omega}(\xi,s) &\coloneqq \frac{\big((\xi-\omega + c )^2+s\big)^{(n-1)/2}}{\big((\xi+\omega + c )^2 + s\big)^{(n+\alpha+1)/2}}
		\label{eq:h_naw}
	\end{align}
	for $\xi >0$ and $s \geq 0$. Then 
	\begin{align}
		\int_0^{\infty} \sup_{s \geq 0} g_{n,\alpha,\omega}(\xi,s)  d\xi &= 
		\begin{cases}
			O\left( \dfrac{1}{n^{\alpha}} \right), & 0 \leq \alpha < 1, \vspace{3pt} \\
			O\left( \dfrac{\log n}{n} \right), &  \alpha = 1, 
			\hspace{30pt} (n \to \infty) \vspace{3pt} \\
			O\left( \dfrac{1}{n^{(\alpha+1)/2}} \right), & \alpha > 1,
		\end{cases}
		\label{eq:g_bound} 
	\end{align}
	and
	\begin{align}
		\int_0^{\infty} \sup_{s \geq 0} h_{n,\alpha,\omega}(\xi,s) d\xi &= 
		\begin{cases}
			O\left( \dfrac{\log n}{n} \right), & \alpha = 0, \vspace{3pt} \\
			O\left( \dfrac{1}{n^{\alpha/2+1}} \right), & \alpha >0,
		\end{cases}
		\qquad (n \to \infty).
		\label{eq:h_bound}
	\end{align}
\end{lemma}
\begin{proof}
	It suffices to show  the estimate \eqref{eq:g_bound}  for $g_{n,\alpha,\omega}$.
	Indeed, the estimate \eqref{eq:h_bound} for $h_{n,\alpha,\omega}$
	is obtained  by replacing $n$ with $n-1$ and $\alpha$
	with $\alpha+1$ in the estimate \eqref{eq:g_bound} for $g_{n,\alpha,\omega}$. 
	
	Let $n \in \mathbb{N}$, $\alpha \geq 0$, and $\omega >0$. 
	To obtain the estimate for $g_{n,\alpha,\omega}$, fix $\xi >0$ and
	define 
	\[
	G_{\xi}(s) \coloneqq g_{n,\alpha,\omega}(\xi,s)^2 = 
	\frac{\big((\xi-\omega + c )^2+s \big)^{n}}{\big((\xi+\omega + c)^2 + s\big)^{n+\alpha+1}}
	\]
	for $s \geq 0$.
	Then
	\[
	G_{\xi}'(s) = \frac{\big((\xi-\omega+c)^2+s\big)^{n-1}}
	{\big((\xi+\omega+c)^2+s\big)^{n+\alpha+2}} \chi (s),
	\]
	where
	\[
	\chi (s) \coloneqq n \big((\xi+\omega+c)^2+s\big) - 
	(n+\alpha+1) \big((\xi-\omega+c)^2+s \big).
	\]
	We have
	\[
	\chi  (s) =
	(\alpha+1)
	\left(
	-\xi^2 + 2
	\left(
	\frac{2\omega n}{\alpha+1} + \omega - c
	\right) \xi
	+
	\frac{4\omega c n}{\alpha+1} - (\omega - c)^2 - s
	\right).
	\]
	Therefore, $\chi(s_1) = 0$, where $s_1$ is defined by
	\[
	s_1 \coloneqq -\xi^2 + 2
	\left(
	\frac{2\omega n}{\alpha+1} + \omega - c
	\right) \xi
	+
	\frac{4\omega c n}{\alpha+1} - (\omega- c)^2 .
	\] 
	Since $\omega, c >0$, 
	we obtain
	\[
	\frac{4\omega c n}{\alpha+1} - (\omega- c)^2  >  0
	\]
	for all sufficiently large $n \in \mathbb{N}$. Hence,
	there exists $n_1 \in \mathbb{N}$ such that 
	the equation
	\[
	-\xi^2 + 2
	\left(
	\frac{2\omega n}{\alpha+1} + \omega - c
	\right) \xi
	+
	\frac{4\omega c n}{\alpha+1} - (\omega- c)^2 = 0
	\]
	has a unique positive solution $\xi_1 = \xi_1(n)$ for all $n \geq n_1$, and 
	\[
	\xi_1(n) 
	=
	\left(
	\frac{2\omega n}{\alpha+1} + \omega - c
	\right) + \frac{2\omega \sqrt{n (n + \alpha + 1)}}{\alpha + 1}.
	\]
	
	Let $n \geq n_1$.
	We obtain
	\begin{equation}
		\label{eq:g_na_bound}
		\sup_{s \geq 0} g_{n,\alpha,\omega}(\xi,s) =
		\begin{cases}
			\displaystyle \sqrt{ G_{\xi}(s_1)}, & \xi \leq \xi_1, \vspace{3pt}\\
			\displaystyle \sqrt{ G_{\xi}(0)}, & \xi> \xi_1.
		\end{cases} 
	\end{equation}
	Routine calculations show that 
	\begin{align*}
		(\xi-\omega + c )^2 + s_1 &= \frac{4\omega n }{\alpha+1} (\xi+c),\\
		(\xi+\omega + c)^2 + s_1 &= \frac{4\omega (n+\alpha+1)}{\alpha+1} (\xi+c).
	\end{align*}
	Hence we obtain
	\begin{align*}
		G_{\xi}(s_1) =
		\left(
		\frac{n}{n+\alpha+1}
		\right)^{n}
		\left( \frac{\alpha+1}{4\omega (n+\alpha+1)}
		\right)^{\alpha+1}
		\frac{1}{(\xi+c)^{\alpha+1}}.
	\end{align*}
	This and  \eqref{eq:g_na_bound} give
	\begin{align*}
		&\int_0^{\xi_1} \sup_{s \geq 0} g_{n,\alpha,\omega}(\xi,s) d\xi \\
		&\qquad =
		\left(
		\frac{n}{n+\alpha+1}
		\right)^{n/2}
		\left( \frac{\alpha+1}{4\omega (n+\alpha+1)}
		\right)^{(\alpha+1)/2}
		\int^{\xi_1}_0 \frac{1}{(\xi+c)^{(\alpha+1)/2}} d\xi.
		\label{eq:int_below_x1}
	\end{align*}
	Noting that $\xi_1(n) = O(n)$ as $n \to \infty$,
	we have
	\begin{equation}
		\label{eq:g_bound1_exp} 
		\int_0^{\xi_1} \sup_{s \geq 0} g_{n,\alpha,\omega}(\xi,s)  d\xi =
		\begin{cases}
			O\left( \dfrac{1}{n^{\alpha}} \right), & \alpha < 1, \vspace{3pt} \\
			O\left( \dfrac{\log n}{n} \right), &  \alpha = 1,  \vspace{3pt} \\
			O\left( \dfrac{1}{n^{(\alpha+1)/2}} \right), & \alpha > 1
		\end{cases}
	\end{equation}
	as $n \to \infty$.
	We also see from \eqref{eq:g_na_bound} that 
	\begin{align*}
		\int_{\xi_1}^{\infty} \sup_{s \geq 0} g_{n,\alpha,\omega}(\xi,s) d\xi 
		&=
		\int_{\xi_1}^{\infty}  \frac{(\xi-\omega+c)^{n}}{(\xi+\omega+c)^{n+\alpha +1}}d\xi  
		\leq
		\int_{\xi_1}^{\infty}  \frac{1}{(\xi+\omega+c)^{\alpha +1}}d\xi.
	\end{align*}
	Hence
	\begin{equation}
		\label{eq:g_bound2_exp}
		\int_{\xi_1}^{\infty} \sup_{s \geq 0} g_{n,\alpha,\omega}(\xi,s) d\xi = O\left(
		\frac{1}{n^{\alpha}}
		\right)
	\end{equation}
	as $n \to \infty$. 
	Combining the estimates 
	\eqref{eq:g_bound1_exp}  and \eqref{eq:g_bound2_exp}, we obtain
	the estimate \eqref{eq:g_bound} for $g_{n}$.
\end{proof}

Now we are in a position to prove Proposition~\ref{prop:fn_exp}
and Theorem~\ref{thm:CT_exp_variable}.
\begin{proof}[Proof of Proposition~\ref{prop:fn_exp}]
	Define $\tilde \omega_p \coloneqq \min \{ \omega_p,\,c^2/\omega_q \} $. Then
	$\mathcal{S}(\omega_p,\omega_q) \subset \mathcal{S}(\tilde \omega_p,\omega_q)$ 
	and 
	$c^2 \geq \tilde \omega_p \omega_q$. 
	Replacing $\omega_p$ with $\tilde \omega_p$ in the definition \eqref{eq:fn_def_exp_variable}
	of $f_{n,\alpha,(\omega_k)}$,
	we define the function $\tilde f_{n,\alpha,(\omega_k)}$ by
	\[
	\tilde f_{n,\alpha,(\omega_k)}(z) 
	\coloneqq
	\frac{1}{(z+\tilde \omega_p+c)^{\alpha}}
	\prod_{k=1}^n
	\frac{z-\omega_k+c}{z+\omega_k+c},\quad z \in \mathbb{C}_+,
	\]
	where $(\omega_k)_{k \in \mathbb{N}} \in 
	\mathcal{S}(\tilde \omega_p,\omega_q)$.
	Then, for all $(\omega_k)_{k \in \mathbb{N}} \in 
	\mathcal{S}(\omega_p,\omega_q)$,
	\[
	f_{n,\alpha,(\omega_k)} =
	r_{\alpha}
	\tilde f_{n,\alpha,(\omega_k)},
	\]
	where 
	\[
	r_{\alpha}(z) \coloneqq
	\left(
	\frac{z+\tilde \omega_p+c}{z+\omega_p+c}
	\right)^{\alpha},\quad z \in \mathbb{C}_+.
	\]
	Since $r_1 \in \mathcal{LM}$ (see Example~\ref{ex:LM_functions}.(a)),
	we have from 
	\cite[Proposition 2.2]{Batty2021} that $r_{\alpha} \in \mathcal{B}$.
	
	Now we apply Lemmas~\ref{lem:each_bound} and \ref{lem:g_h_bound} to
	$\tilde f_{n,\alpha,(\omega_k)}$.
	The derivative of $\tilde f_{n,\alpha,(\omega_k)}$ is given by
	\begin{align*}
		\tilde f_{n,\alpha,(\omega_k)}'(z) &= 
		\frac{-\alpha}{(z+\tilde\omega_p+c)^{\alpha+1}} 
		\prod_{k=1}^n
		\frac{z-\omega_k+c}{z+\omega_k+c} \\
		&\qquad + 
		\frac{2}{(z+\tilde\omega_p+c)^{\alpha}} 
		\sum_{\ell=1}^n	
		\frac{\omega_\ell}{(z+\omega_\ell+c)^2}
		\prod_{k=1,k\not=\ell}^n
		\frac{z-\omega_k+c}{z+\omega_k+c}.
	\end{align*}
	Let $\xi >0$, $\eta \in \mathbb{R}$, and 
	$s \coloneqq \eta^2 \geq 0$.
	Since $c^2 \geq \tilde \omega_p \omega_q$,
	Lemma~\ref{lem:each_bound} shows that
	\begin{align*}
		|\tilde f_{n,\alpha,(\omega_k)}'(\xi+i \eta)| \leq 
		\alpha g_{n,\alpha,\tilde\omega_p}(\xi,s)  + 2\omega_q n h_{n,\alpha,\tilde\omega_p}(\xi,s) 
	\end{align*}
	for all $(\omega_k)_{k \in \mathbb{N}} \in 
	\mathcal{S}(\tilde\omega_p,\omega_q)$,
	where $g_{n,\alpha,\tilde\omega_p}$ and $h_{n,\alpha,\tilde\omega_p}$ are defined by \eqref{eq:g_naw} and \eqref{eq:h_naw} with
	$\omega=\tilde \omega_p$, respectively.
	From Lemma~\ref{lem:g_h_bound}, we see that there exist
	constants $\tilde M>0$ and $n_0 \in \mathbb{N}$ such that 
	\[
	\|\tilde f_{n,\alpha,(\omega_k)}\|_{\mathcal{B}_0} \leq \tilde M F_{\alpha}(n)
	\]
	for all $n \geq n_0$ and $(\omega_k)_{k \in \mathbb{N}} \in 
	\mathcal{S}(\tilde \omega_p,\omega_q)$.
	Since 
	$\mathcal{S}(\omega_p,\omega_q) \subset \mathcal{S}(\tilde \omega_p,\omega_q)
	$ and since
	\[
	\|f_{n,\alpha,(\omega_k)}\|_{\mathcal{B}} \leq 
	\|r_{\alpha}\|_{\mathcal{B}}\,  \|\tilde f_{n,\alpha,(\omega_k)}\|_{\mathcal{B}},
	\]
	for all $(\omega_k)_{k \in \mathbb{N}} \in 
	\mathcal{S}(\omega_p,\omega_q)$, the constant $M \coloneqq 
	2\|r_{\alpha}\|_{\mathcal{B}}
	\tilde M $ satisfies
	\[
	\|f_{n,\alpha,(\omega_k)}\|_{\mathcal{B}_0} \leq M F_{\alpha}(n)
	\]
	for all $n \geq n_0$ and $(\omega_k)_{k \in \mathbb{N}} \in 
	\mathcal{S}(\omega_p,\omega_q)$.
\end{proof}

\begin{proof}[Proof of Theorem~\ref{thm:CT_exp_variable}]
	Since the $C_0$-semigroup $(e^{-tA})_{t \geq 0}$ is exponentially stable,
	there exist constants $K \geq 1$ and $c >0$ such that 
	\[
	\|e^{-tA}\| \leq K e^{-c t}
	\]
	for all $t \geq 0$.
	Define $B \coloneqq  A - c I$. Then
	$-B$ is the generator of a bounded $C_0$-semigroup $(e^{-tB})_{t \geq 0}$, and 
	$\sup_{t \geq 0} \|e^{-tB}\| \leq K$.
	
	Define the function $f_{n,\alpha,(\omega_k)}$ by \eqref{eq:fn_def_exp_variable}.
	Then
	\begin{align*}
		f_{n,\alpha,(\omega_k)}(B) &= \left( 
		\prod_{k=1}^n
		(B-\omega_k I + c I) (B+\omega_k I + c I)^{-1} \right)
		(B+\omega_pI + c  I)^{-\alpha}\\
		&=
		\left(  \prod_{k=1}^n 
		V_{\omega_k}(A)
		\right)
		(A+\omega_pI)^{-\alpha}.
	\end{align*}
	The inequality \eqref{eq:fA_bound_Bnorm} gives
	\[
	\|f_{n,\alpha,(\omega_k)}(B)\| \leq  2K^2  \|f_{n,\alpha,(\omega_k)}\|_{\mathcal{B}_0}.
	\]
	Combining this with
	Proposition~\ref{prop:fn_exp}, we see that
	there exist constants $M>0$ and $n_0 \in \mathbb{N}$ such that 
	the desired estimate \eqref{eq:VA_exp_variable} holds
	for all $n \geq n_0$ and $(\omega_k)_{k \in \mathbb{N}} \in 
	\mathcal{S}(\omega_p,\omega_q)$.
\end{proof}

We provide some remarks on the optimality of the estimates given in
Proposition~\ref{prop:fn_exp} and 
Theorem~\ref{thm:CT_exp_variable}.
\begin{remark}
	\label{rem:optimality1}
	The estimate \eqref{eq:f_n_norm} for $\|f_{n,\alpha,(\omega_k)}\|_{\mathcal{B}_0}$ 
	with $\alpha >0$
	cannot be  improved in general even
	in the constant case $\omega_k \equiv 1$.
	To see this, fix
	$c>0$, and define
	\begin{equation}
		\label{eq:f_na_rem}
		f_{n,\alpha}(z) \coloneqq \frac{(z-1+c)^n}{(z+1+c)^{n+\alpha}},\quad z \in \mathbb{C}_+,
	\end{equation}
	where $n \in \mathbb{N}$ and $\alpha >0$.
	Then we have
	\begin{equation}
		\label{eq:f_na_lower_bound}
		\|f_{n,\alpha} \|_{\mathcal{B}_0} \geq \|f_{n,\alpha}\|_{\infty} = \sup_{\eta \in \mathbb{R}} 
		\left|
		f_{n,\alpha}(i \eta)
		\right|.
	\end{equation}
	Since
	\[
	|f_{n,\alpha}(i \sqrt{n})|^2 = \frac{1}{\big((1+c)^2+n\big)^{\alpha}} \left( 
	\frac{(1-c)^2+n }{(1+c)^2+n}
	\right)^n,
	\]
	there exist $M_1>0$ and $n_1 \in \mathbb{N}$ such that 
	\begin{equation}
		\label{eq:f_na_point_lower_bound}
		|f_{n,\alpha}(i \sqrt{n})| \geq \frac{M_1}{n^{\alpha/2}}
	\end{equation}
	for all $n \geq n_1$.
	The estimates \eqref{eq:f_na_lower_bound} and \eqref{eq:f_na_point_lower_bound}
	yield
	\[
	\|f_{n,\alpha}\|_{\mathcal{B}_0} \geq \frac{M_1}{n^{\alpha/2}}
	\]
	for all $n \geq n_1$.
\end{remark}
\begin{remark}
	\label{rem:optimality2}
	One cannot in general improve
	the operator norm estimate \eqref{eq:VA_exp_variable} for $\left\| \left(\prod_{k=1}^n V_{\omega_k}(A)\right) A^{-\alpha}  \right\| $ with $\alpha >0$ either. 
	Let $B$ be a self-adjoint operator on a Hilbert space $H$, and assume that 
	\[
	\sigma(B) \supset \{ \sqrt{n} : n \in \mathbb{N} \text{~and~} n \geq n_2 \}
	\]
	for some $n_2 \in \mathbb{N}$.
	Fix $c>0$ and
	define $A \coloneqq iB + cI$. Then $-A$ is the generator of 
	an exponentially stable $C_0$-semigroup on $H$.
	For $n \in \mathbb{N}$ and $\alpha >0$,
	define the function $f_{n,\alpha} \in \mathcal{B}$ by \eqref{eq:f_na_rem}.
	Since
	\[
	V(A) = (iB -I + cI) (iB +I +cI)^{-1},
	\]
	the spectral mapping theorem (see, e.g., \cite[Theorem~2.7.8]{Haase2006})
	shows that
	\begin{align*}
		\|V(A)^n (A+I)^{-\alpha}\| =
		\|f_{n,\alpha}(iB) \| 
		= \sup_{\lambda \in \sigma(iB)} |f_{n,\alpha}(\lambda)|.
	\end{align*}
	By the estimate \eqref{eq:f_na_point_lower_bound},
	\[
	\|V(A)^n A^{-\alpha}\|  \geq \frac{M_2}{n^{\alpha/2}}
	\]
	for all $n \geq \max\{n_1,n_2  \}$, where $M_2 \coloneqq M_1/ \|( I +A^{-1})^{-\alpha}\|$.
\end{remark}

\begin{remark}
	The argument in the proof of 
	Proposition~\ref{prop:fn_exp}
	gives
	a non-sharp constant $M >0$ for the estimate \eqref{eq:f_n_norm}, 
	although the decay rate $n^{-\alpha/2}$ cannot be improved in general.
	In the proof,
	we have replaced $\omega_p$ by $c^2/\omega_q$ when
	$c^2/\omega_q < \omega_p$. 
	Consequently, the constant $M$ there
	may allow
	a unnecessary large variation of $(\omega_k)_{k\in\mathbb{N}}$
	when
	$c$ is small and $\omega_q$ is large. Finding a sharp constant $M$ for given constants $\omega_p$ and $\omega_q$
	is left as a topic for further research.
\end{remark}

\subsubsection{Polynomially stable semigroups with normal generators}
Using spectral properties of normal operators,
one can also obtain
an estimate of $\|(\prod_{k=1}^n V_{\omega_k}(A) ) A^{-\alpha}\|$
for the normal generator $-A$ of a 
polynomially stable $C_0$-semigroup.
A similar result holds for multiplication operators on $L^p$-spaces and
spaces of continuous functions that vanish at infinity.

\begin{proposition}
	\label{prop:poly_normal}
	Consider the following two cases:
	\begin{enumerate}
		\renewcommand{\labelenumi}{\rm (\alph{enumi})}
		\item
		Let 
		$(e^{-tA})_{t\geq 0}$ be 
		a bounded $C_0$-semigroup  on a Hilbert space 
		$H$ such that 
		$A$ is normal.
		\item 
		Let $\Omega$ be a locally compact Hausdorff space and let $\mu$
		be a $\sigma$-finite regular Borel measure on $\Omega$. Assume that 
		either
		\begin{enumerate}
			\renewcommand{\labelenumii}{\rm (\roman{enumii})}
			\item $X \coloneqq L^p(\Omega,\mu)$ for $1\leq p < \infty$ and 
			$\phi \colon \Omega \to \mathbb{C}$ is measurable with essential range in
			$\mathbb{C}_+$; or that
			\item $X \coloneqq  C_0(\Omega)$ and $\phi\colon \Omega \to \mathbb{C}$
			is continuous with range in $\mathbb{C}_+$.
		\end{enumerate}
		Let $A$ be the multiplication operator induced by $\phi$ on $X$, i.e,
		$Af = \phi f$ with domain $D(A) \coloneqq \{f \in X\colon \phi f \in X\}$.
	\end{enumerate}
	In both cases {\em (a)} and {\em (b)},
	assume that $\|e^{-tA} (I+A)^{-1} \| = O(t^{-1/\beta})$ as $t\to \infty$ for
	some $\beta>0$. Then, for all $\alpha>0$ and $0< \omega_p \leq \omega_q < \infty$,
	there exist constants $M>0$ and $n_0 \in \mathbb{N}$ such that 
	\begin{equation}
		\label{eq:VA_poly_variable}
		\left\| \left( \prod_{k=1}^n V_{\omega_k}(A)  \right) A^{-\alpha}  \right\| 
		\leq \frac{M}{n^{\alpha/(2+\beta)}}
	\end{equation}
	for all $n \geq n_0$ and $(\omega_k)_{k \in \mathbb{N}} \in 
	\mathcal{S}(\omega_p,\omega_q)$.
\end{proposition}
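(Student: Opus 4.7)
My plan is to reduce the operator norm to a scalar supremum over a subset of $\overline{\mathbb{C}_+}$ and then optimize that scalar expression. In case (a), the spectral theorem for normal operators, together with the fact that $0 \in \varrho(A)$ (guaranteed for polynomially stable semigroups by \cite[Theorem~1.1]{Batty2008}), shows $\sigma(A) \subset \overline{\mathbb{C}_+}\setminus\{0\}$; in case (b), the multiplication operator calculus acts pointwise via $\phi$. Writing $S$ for $\sigma(A)$ in case (a) and for the essential range of $\phi$ in case (b), in both cases
$$\left\|\left(\prod_{k=1}^n V_{\omega_k}(A)\right)A^{-\alpha}\right\| = \sup_{\lambda \in S}\bigl|F_n(\lambda)\bigr|, \qquad F_n(\lambda) := \frac{1}{\lambda^{\alpha}}\prod_{k=1}^n\frac{\lambda-\omega_k}{\lambda+\omega_k}.$$

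The next step is to extract from $\|e^{-tA}(I+A)^{-1}\| = O(t^{-1/\beta})$ a quantitative spectral gap. In both cases the hypothesis reads $\sup_{\lambda \in S}|e^{-t\lambda}/(1+\lambda)| = O(t^{-1/\beta})$ as $t\to\infty$. This forces $\re\lambda > 0$ for every $\lambda\in S$ (otherwise the supremum would be bounded below by a positive constant), and choosing $t\sim 1/\re\lambda$ for each such $\lambda$ yields
$$\re\lambda \geq \frac{c_1}{(1+|\lambda|)^{\beta}}\qquad\text{for all }\lambda\in S,$$
for some $c_1>0$; combined with $\inf_{\lambda\in S}|\lambda| > 0$, this is the spectral inclusion I will use.

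With the gap in hand, the key pointwise estimate is
$$\left|\frac{\lambda-\omega_k}{\lambda+\omega_k}\right|^2 = 1-\frac{4\omega_k\,\re\lambda}{|\lambda+\omega_k|^2} \leq 1 - \frac{2\omega_p\,\re\lambda}{|\lambda|^2+\omega_q^2},$$
which is crucially uniform in $\omega_k\in[\omega_p,\omega_q]$ — this is how variable parameters are absorbed. Taking the product over $k$, applying $1-x \leq e^{-x}$, and then inserting the spectral gap yields, with $r := |\lambda|$,
$$|F_n(\lambda)| \leq r^{-\alpha}\exp\!\left(-\frac{c_2\,n}{(1+r)^{\beta}(r^2+\omega_q^2)}\right),$$
with $c_2 > 0$ depending only on $\omega_p$, $\omega_q$, and the constants in the polynomial stability hypothesis.

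The final step is a one-variable optimization of the right-hand side over $r \geq r_0 := \inf_{\lambda\in S}|\lambda|>0$. For $r$ in a bounded range the exponential factor decays super-polynomially in $n$ while $r^{-\alpha}$ stays bounded, and for large $r$ the right-hand side behaves like $r^{-\alpha}\exp(-c_3 n/r^{2+\beta})$; logarithmic differentiation locates its maximum at $r_\ast \sim n^{1/(2+\beta)}$ with value $O(n^{-\alpha/(2+\beta)})$, which is the bound claimed. The main obstacle I anticipate is bookkeeping rather than analytical depth: splitting $S$ into bounded and large-$|\lambda|$ regimes cleanly, handling the transition without losing sharpness, and verifying that every constant depends only on $\omega_p,\omega_q$ (not on the individual sequence $(\omega_k)$) so that the final estimate is uniform over $\mathcal{S}(\omega_p,\omega_q)$.
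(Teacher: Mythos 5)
Your proposal is correct and follows essentially the same route as the paper: reduce the operator norm to $\sup_{\lambda\in S}|F_n(\lambda)|$ via the spectral theorem (resp.\ the multiplication-operator functional calculus), convert the polynomial stability hypothesis into a lower bound on $\re\lambda$ over the spectrum, bound $|(\lambda-\omega)/(\lambda+\omega)|^2 = 1 - 4\omega\re\lambda/|\lambda+\omega|^2$ uniformly over $\omega\in[\omega_p,\omega_q]$ by taking $\omega\geq\omega_p$ in the numerator and $\omega\leq\omega_q$ in the denominator, and optimize the resulting scalar expression in $|\lambda|$ to obtain $O(n^{-\alpha/(2+\beta)})$. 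The only cosmetic differences are that the paper cites \cite[Propositions~4.1 and 4.2]{Batkai2006} for the spectral lower bound instead of rederiving it from the decay of $\|e^{-tA}(I+A)^{-1}\|$, and splits the spectrum into $\{\re\lambda\leq\delta\}$ and its complement before maximizing $s^{-q}(1-c/s^p)^{n/2}$ exactly, whereas you use $1-x\leq e^{-x}$ and a single unified bound.
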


\begin{proof}
	Step~1:
	Let $n \in \mathbb{N}$, $\alpha >0$, and  $(\omega_k)_{k \in \mathbb{N}} \in \mathcal{S}(\omega_p,\omega_q)$.
	Define
	\[
	f_{n,\alpha,(\omega_k)}(z) \coloneqq
	\left( 
	\prod_{k=1}^n \frac{z - \omega_k}{z+\omega_k}
	\right)
	z^{-\alpha} 
	\]
	for $z \in \mathbb{C}_+$.
	The estimate $\|e^{-tA} (I+A)^{-1} \| = O(t^{-1/\beta})$ as $t\to \infty$
	implies that $\sigma(A)$ does not intersect with the imaginary axis $i \mathbb{R}$; 
	see \cite[Theorem~1.1]{Batty2008}.
	Hence $\sigma(A) \subset \mathbb{C}_+$ and $A$ has a bounded inverse
	in both cases (a) and (b). Moreover,
	\begin{equation}
		\label{eq:norm_spectrum}
		\left\| \left(  \prod_{k=1}^n V_{\omega_k}(A) \right) A^{-\alpha} \right\|  =
		\sup_{\lambda \in \sigma(A)} |f_{n,\alpha,(\omega_k)}(\lambda)|,
	\end{equation}
	where we used 
	the spectral mapping theorem (see, e.g., \cite[Theorem~2.7.8]{Haase2006}) for the case (a) and \cite[Propositions I.4.2 and I.4.10]{Engel2000} for the case (b); see also
	the proof of \cite[Proposition~4.5]{Wakaiki2021JEE}, which provides a detailed derivation
	of \eqref{eq:norm_spectrum}
	in the case $\omega_k \equiv 1$.
	
	By \eqref{eq:norm_spectrum}, it suffices to show that there exist constants 
	$M>0$ and $n_0 \in \mathbb{N}$ such that 
	\begin{equation}
		\label{eq:f_nao_bound_normal}
		\sup_{\lambda \in \sigma(A)} |f_{n,\alpha,(\omega_k)}(\lambda)| \leq 
		\frac{M}{n^{\alpha/(2+\beta)}}
	\end{equation}
	for all $n \geq n_0$ and $(\omega_k)_{k \in \mathbb{N}} \in 
	\mathcal{S}(\omega_p,\omega_q)$.
	We have from Proposition~\ref{prop:frac_normalize} that
	$\|e^{-tA} A^{-\beta} \| = O(t^{-1})$ as $t \to \infty$.
	Hence, in both cases (a) and (b), 
	there exist $\delta, C>0$ such that 
	$|\im \lambda| \geq C (\re \lambda)^{-1/\beta}$ for all $\lambda \in \sigma(A)$
	with $\re \lambda \leq \delta$; see
	\cite[Propositions~4.1 and 4.2]{Batkai2006}.
	Define
	\begin{align*}
		\Omega_1 := 
		\{
		\lambda \in \sigma(A):
		\re \lambda \leq \delta
		\},\quad 
		\Omega_2 := \sigma(A) \setminus \Omega_1.
	\end{align*}
	We divide the proof of the estimate \eqref{eq:f_nao_bound_normal}
	into two cases:
	$\lambda \in \Omega_1$
	and $\lambda \in \Omega_2$.
	The following observation will be useful for later purposes: 
	For all $\lambda \in \sigma(A)$ and $\omega >0$,
	\begin{equation}
		\label{eq:1pmlambda}
		\left|
		\frac{\lambda - \omega}{\lambda + \omega}
		\right|^2 = 
		\frac{(\re \lambda - \omega)^2 + |\im \lambda|^2}
		{(\re \lambda + \omega)^2 + |\im \lambda|^2} = 
		1 - \frac{4 \omega \re \lambda}{|\lambda+\omega|^2}.
	\end{equation}
	
	Step~2:
	First we assume that $\lambda \in \Omega_1$. 
	We have
	\[
	|\lambda + \omega_q|^2 \leq C_0 |\im \lambda|^2
	\]
	for some constant $C_0 \geq 1$ independent 
	of $\lambda$.
	Combining this estimate with $\re \lambda \geq C^{\beta} / |\im \lambda|^{\beta}$, we obtain
	\[
	\frac{4 \omega \re \lambda}{|\lambda+\omega|^2} \geq
	\frac{4C^{\beta} \omega_p}{C_0 |\im \lambda|^{\beta+2}}
	\]
	for all $\omega \in [\omega_p,\omega_q]$.
	Then \eqref{eq:1pmlambda} gives that for all $\omega \in [\omega_p,\omega_q]$,
	\[
	\left|
	\frac{\lambda-\omega}{\lambda+\omega}
	\right|^2 \leq 1 - \frac{C_1}{|\im \lambda|^{\beta+2}},
	\quad \text{where~}
	C_1 \coloneqq
	\frac{4 C^{\beta} \omega_p }{C_0}.
	\]
	Hence
	\begin{equation}
		\label{eq:im_lam_bound}
		|f_{n,\alpha,(\omega_k)}(\lambda)| \leq 
		\frac{1}{|\im \lambda|^{\alpha}}
		\left(
		1 - \frac{C_1}{|\im \lambda|^{\beta+2}}
		\right)^{n/2}
	\end{equation}
	for all $(\omega_k)_{k \in \mathbb{N}} \in 
	\mathcal{S}(\omega_p,\omega_q)$.
	
	Let $a,b,c>0$, and
	define the function $g_n$ by
	\[
	g_n(s) \coloneqq \frac{1}{s^a} \left(
	1 - \frac{c}{s^b}
	\right)^{n/2},\quad s \geq  c^{1/b}.
	\]
	Then
	\[
	g_n'(s) = \frac{1}{s^{a+1}}\left(
	1 - \frac{c}{s^b}
	\right)^{n/2-1} \frac{
		ac+bcn/2 - as^b
	}{s^b }.
	\]
	for all $s > c^{1/b}$.
	Since $g_n$ takes the maximum value at
	\[
	s = \left(c + \frac{bc}{2a}n \right)^{1/b} >  c^{1/b},
	\]
	we obtain
	\begin{equation}
		\label{eq:gn_estimate}
		\sup_{s \geq c^{1/b}} g_n(s)
		= \left(c + \frac{bc}{2a}n \right)^{-a/b}
		\left(
		1 - \frac{2a}{2a+bn}
		\right)^{n/2}.
	\end{equation}
	This and the estimate \eqref{eq:im_lam_bound} show that 
	there exist $M_1>0$ and $n_1 \in \mathbb{N}$ 
	such that 
	\begin{equation}
		\label{eq:fn_estimate1}
		\sup_{\lambda \in \Omega_1}
		|f_{n,\alpha,(\omega_k)}(\lambda)| \leq 
		\frac{M_1}{n^{\alpha/(\beta+2)}}
	\end{equation}
	for all $n \geq n_1$ and $(\omega_k)_{k \in \mathbb{N}} \in \mathcal{S}(\omega_p,\omega_q)$.
	
	Step~3:
	Next we assume that $\lambda \in \Omega_2$. Since $\re \lambda > \delta$,
	we obtain
	\[
	\frac{4 \omega \re \lambda}{|\lambda+\omega|^2} \geq
	\frac{4 \delta \omega}{ ( |\lambda|+\omega )^{2}}
	\geq 
	\frac{4 \delta \omega_p}{ (1 +\omega_q/\delta )^2|\lambda|^{2}}
	\]
	for all $\omega \in [\omega_p,\omega_q]$.
	Together with \eqref{eq:1pmlambda}, this implies that for all
	$\omega \in [\omega_p,\omega_q]$,
	\begin{equation*}
		\left|
		\frac{\lambda - \omega}{\lambda + \omega}
		\right|^2 \leq 1 - \frac{C_2}{|\lambda|^{2}},
		\quad \text{where~}
		C_2 \coloneqq
		\frac{4\delta\omega_p}{(1+\omega_q/\delta)^2}.
	\end{equation*}
	From this estimate, we derive
	\begin{equation*}
		|f_{n,\alpha,(\omega_k)}(\lambda)| \leq 
		\frac{1}{|\lambda|^\alpha}\left(
		1 - \frac{C_2}{|\lambda|^{2}}
		\right)^{n/2}
	\end{equation*}
	for all $(\omega_k)_{k \in \mathbb{N}} \in 
	\mathcal{S}(\omega_p,\omega_q)$.
	Using the estimate \eqref{eq:gn_estimate}  again, we see that 
	there exist $M_2>0$ and $n_2\in \mathbb{N}$ 
	such that 
	\begin{equation}
		\label{eq:fn_estimate2}
		\sup_{\lambda \in \Omega_2}
		|f_{n,\alpha,(\omega_k)}(\lambda)| \leq 
		\frac{M_2}{n^{\alpha/2}} 
	\end{equation}
	for all $n \geq n_2$ and $(\omega_k)_{k \in \mathbb{N}} \in \mathcal{S}(\omega_p,\omega_q)$.
	Thus, the desired estimate \eqref{eq:f_nao_bound_normal} is obtained
	from \eqref{eq:fn_estimate1} and \eqref{eq:fn_estimate2}.
\end{proof}

\section{Estimates for inverse generators}
\label{sec:IG}
In this section, we are interested in the asymptotic behavior of $\|e^{-tA^{-1}}A^{-\alpha}\|$
for $\alpha >0$, which has been studied in the
Banach space case \cite{Zwart2007, deLaubenfels2009} and the
Hilbert space case \cite{Wakaiki2023IEOT} as mentioned in 
Section~\ref{sec:introduction}.
First we show that $\|e^{-tA^{-1}}A^{-\alpha}\|$ is uniformly bounded for $t \geq 0$
if $-A$ is the generator of a bounded $C_0$-semigroup on a Hilbert space and 
has a bounded inverse. Next we estimate the decay rate of $\|e^{-tA^{-1}}A^{-\alpha}\|$
for a polynomially stable $C_0$-semigroup $(e^{-tA})_{t \geq 0}$ on a Hilbert space.
\subsection{Bounded semigroups}
We present an estimate of $\|e^{-tA^{-1}}A^{-\alpha}\|$ for a 
bounded $C_0$-semigroup $(e^{-tA})_{t \geq 0}$ such that 
the generator $-A$ has a bounded inverse.
\begin{theorem}
	\label{thm:Inv_bound}
	Let $-A$ be the generator of a bounded $C_0$-semigroup \sloppy
	on a Hilbert space $H$ such that $0 \in \varrho(A)$. Then, for all $\alpha >0$,
	\begin{equation}
		\label{eq:inv_bounded}
		\sup_{t \geq 0} \|e^{-tA^{-1} } A^{-\alpha}\| < \infty.
	\end{equation}
\end{theorem}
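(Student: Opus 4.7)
The plan is to apply the $\mathcal{B}$-calculus to the function
\[
g_{t}(z) := \frac{z}{(z+1)^{1+\alpha}}\,e^{-t/z},\qquad z\in\mathbb{C}_{+},\ t\geq 0.
\]
Since $f_{3}(z)=(z/(z+1))\,e^{-t/z}\in\mathcal{LM}$ by Example~\ref{ex:LM_functions}(c) and $(z+1)^{-\alpha}\in\mathcal{LM}$ by Example~\ref{ex:LM_functions}(b), $g_{t}=f_{3}\cdot(z+1)^{-\alpha}\in\mathcal{LM}\subset\mathcal{B}$ with $g_{t}(\infty)=0$ since $\alpha>0$. From Example~\ref{ex:LM_calculus}(c), $f_{3}(A)=A(A+I)^{-1}e^{-tA^{-1}}$, so $e^{-tA^{-1}}=(I+A^{-1})f_{3}(A)$. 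Combining this with the identity $A^{-\alpha}=(A+I)^{-\alpha}(I+A^{-1})^{\alpha}$ from \eqref{eq:inv_fractional} (with $c=1$), and the fact that the operators involved commute as $\mathcal{B}$-calculus images of functions of $A$, I obtain
\[
e^{-tA^{-1}}A^{-\alpha}=(I+A^{-1})^{1+\alpha}\,g_{t}(A).
\]
Since $0\in\varrho(A)$, $(I+A^{-1})^{1+\alpha}$ is bounded; by \eqref{eq:fA_bound_Bnorm} together with $g_{t}(\infty)=0$, $\|g_{t}(A)\|\leq 2K^{2}\|g_{t}\|_{\mathcal{B}_{0}}$, where $K=\sup_{t\geq 0}\|e^{-tA}\|$. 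The theorem therefore reduces to proving $\sup_{t\geq 0}\|g_{t}\|_{\mathcal{B}_{0}}<\infty$.

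Differentiation gives
\[
g_{t}'(z)=\frac{1-\alpha z}{(z+1)^{2+\alpha}}\,e^{-t/z}+\frac{t}{z(z+1)^{1+\alpha}}\,e^{-t/z}.
\]
The first summand is bounded in modulus by $|1-\alpha z|/|z+1|^{2+\alpha}$, which is the modulus of the derivative of $z/(z+1)^{1+\alpha}\in\mathcal{LM}\subset\mathcal{B}$, so its contribution to $\|g_{t}\|_{\mathcal{B}_{0}}$ is finite and independent of $t$. For the second summand, write $z=\xi+i\eta$ and $v=\xi^{2}+\eta^{2}$:
\[
\frac{t\,|e^{-t/z}|}{|z|\,|z+1|^{1+\alpha}}=\frac{t\,e^{-t\xi/v}}{v^{1/2}\bigl((\xi+1)^{2}+\eta^{2}\bigr)^{(1+\alpha)/2}}.
\]
A stationary-point analysis in $\eta$ shows that, for $\xi\leq t$, the maximum over $\eta$ is attained near $v_{\ast}\asymp t\xi$, where $t\,e^{-t\xi/v_{\ast}}\asymp v_{\ast}/\xi$, yielding $\sup_{\eta}\lesssim\sqrt{t/\xi}\,(t\xi+2\xi+1)^{-(1+\alpha)/2}$; for $\xi>t$ the maximum is attained at $\eta=0$ and is bounded by $t/[\xi(\xi+1)^{1+\alpha}]$.

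Splitting $(0,\infty)$ in $\xi$ into $(0,1/t)$, $(1/t,t)$, and $(t,\infty)$ and integrating the previous bounds yields
\[
\int_{0}^{1/t}\!\sqrt{t/\xi}\,d\xi=O(1),\qquad \int_{1/t}^{t}\!\frac{d\xi}{t^{\alpha/2}\xi^{1+\alpha/2}}=O\!\left(\tfrac{1}{\alpha}\right),\qquad \int_{t}^{\infty}\!\frac{t\,d\xi}{\xi^{2+\alpha}}=O(t^{-\alpha}),
\]
all uniform for $t\geq 1$, while $t\in[0,1]$ is handled by continuity together with $\|e^{-0\cdot A^{-1}}A^{-\alpha}\|=\|A^{-\alpha}\|<\infty$. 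The principal obstacle is that the naive pointwise bound $t\,e^{-t\xi/v}\leq v/(e\xi)$ would force a divergent $\int d\xi/\xi$ near $\xi=0$; optimising in $\eta$ \emph{before} bounding in $t$, and exploiting $\alpha>0$ for convergence on the middle interval, is what renders the estimate uniform in $t$.
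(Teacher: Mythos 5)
Your proposal is correct and follows essentially the same route as the paper: the paper works with exactly your function $g_t$ (denoted $f_{t,\alpha}$ in \eqref{eq:ft_def_inv}), makes the same reduction of $e^{-tA^{-1}}A^{-\alpha}$ to $\|f_{t,\alpha}\|_{\mathcal{B}_0}$ via \eqref{eq:fA_bound_Bnorm}, and proves $\sup_{t>0}\|f_{t,\alpha}\|_{\mathcal{B}_0}<\infty$ by the same mechanism of maximizing the $t$-weighted term over $\eta$ first (Lemma~\ref{lem:g_inv}), obtaining a bound of the form $\sqrt{t/\xi}\,(t\xi+1)^{-(1+\alpha)/2}$ whose $\xi$-integral is $O(1)$ uniformly in $t$. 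The only cosmetic differences are your grouping of the two harmless derivative terms into $(1-\alpha z)/(z+1)^{2+\alpha}$ and your explicit three-way split of the $\xi$-integral in place of the paper's substitution $\zeta=t\xi$.
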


Let $t >0$ and $\alpha \geq 0$.
To prove Theorem~\ref{thm:Inv_bound}, we
consider the function $f_{t,\alpha}$ on $\mathbb{C}_+$ defined by
\begin{equation}
	\label{eq:ft_def_inv}
	f_{t,\alpha}(z) 
	\coloneqq
	\frac{ze^{-t/z}}{(z+1)^{\alpha+1}} ,\quad z \in \mathbb{C}_+.
\end{equation}
One has $f_{t,\alpha} \in \mathcal{LM}$; see Example~\ref{ex:LM_functions}.
In \cite[Lemma~5.6]{Batty2021JFA}, 
the  estimate in the case $\alpha = 0$,
\begin{equation}
	\label{eq:phi_t_bound}
	\|f_{t,0}\|_{\mathcal{B}_0} = O(\log t)\qquad (n \to \infty),
\end{equation}
has been obtained. 
The next proposition gives an estimate for $\|f_{t,\alpha}\|_{\mathcal{B}_0}$ 
in the case $\alpha >0$.
\begin{proposition}
	\label{prop:ft_bound}
	Let $t,\alpha >0$ and
	define the function $f_{t,\alpha}$ by \eqref{eq:ft_def_inv}.
	Then 
	\begin{equation}
		\label{eq:f_t_norm_bound}
		\sup_{t > 0}\|f_{t,\alpha}\|_{\mathcal{B}_0} < \infty.
	\end{equation}
\end{proposition}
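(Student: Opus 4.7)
The plan is to bound $\|f_{t,\alpha}\|_{\mathcal{B}_0} = \int_0^\infty \sup_{\eta\in\mathbb{R}} |f_{t,\alpha}'(\xi+i\eta)|\,d\xi$ uniformly in $t>0$. A direct computation from the logarithmic derivative yields
\[
f_{t,\alpha}'(z) = \frac{(1-\alpha z)e^{-t/z}}{(z+1)^{\alpha+2}} + \frac{te^{-t/z}}{z(z+1)^{\alpha+1}}.
\]
The first summand is routine: since $|e^{-t/z}|\leq 1$, $|z|/|z+1|\leq 1$ on $\mathbb{C}_+$, and $|z+1|\geq\xi+1$, its $\eta$-supremum is dominated by $(\xi+1)^{-(\alpha+2)} + \alpha(\xi+1)^{-(\alpha+1)}$, whose $\xi$-integral is a finite constant independent of $t$. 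All the work goes into bounding $I(t) := \int_0^\infty \sup_\eta \frac{t|e^{-t/z}|}{|z||z+1|^{\alpha+1}}\,d\xi$ uniformly.

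The naive bound $|z+1|^{\alpha+1}\geq(\xi+1)^{\alpha+1}$ alone is insufficient: combined with the elementary estimate $\sup_{|z|\geq\xi}\frac{te^{-t\xi/|z|^2}}{|z|} \leq e^{-1/2}\sqrt{t/(2\xi)}$ (attained at $|z|^2=2t\xi$ when $\xi\leq 2t$), it reproduces exactly the $\log t$ factor found in \cite[Lemma~5.6]{Batty2021JFA} for $\alpha=0$. The key idea to remove the logarithm when $\alpha>0$ is to \emph{pair} this small-$\xi$ estimate with a second, complementary, large-$\xi$ estimate obtained from $|z+1|\geq|z|$ (valid on $\mathbb{C}_+$ since $|z+1|^2-|z|^2=2\xi+1>0$), namely
\[
\sup_\eta \frac{te^{-t\xi/|z|^2}}{|z||z+1|^{\alpha+1}} \leq \sup_{y\geq\xi^2}\frac{te^{-t\xi/y}}{y^{(\alpha+2)/2}} \leq C_\alpha t^{-\alpha/2}\xi^{-(\alpha+2)/2},
\]
where the interior optimum $y=2t\xi/(\alpha+2)$ delivers the stated constant, and the boundary case $y=\xi^2$ (occurring when $\xi>2t/(\alpha+2)$) needs to be checked to preserve the same form up to a constant.

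The two bounds $C_1\sqrt{t/\xi}$ and $C_\alpha t^{-\alpha/2}\xi^{-(\alpha+2)/2}$ coincide at $\xi=1/t$, both equal to $t$. Splitting the integral at this crossover,
\[
I(t) \leq \int_0^{1/t} C_1\sqrt{t/\xi}\,d\xi + \int_{1/t}^\infty C_\alpha t^{-\alpha/2}\xi^{-(\alpha+2)/2}\,d\xi = 2C_1 + \frac{2C_\alpha}{\alpha},
\]
independent of $t$. The convergence of the second integral at infinity requires precisely $(\alpha+2)/2>1$, which is the role played by the hypothesis $\alpha>0$ in eliminating the logarithm. The main obstacle I anticipate is a clean verification that both estimates hold for \emph{all} $\xi>0$, rather than only in the interior regime: one has to check that the boundary value $te^{-t/\xi}/\xi^{\alpha+2}$ (respectively $(t/\xi)e^{-t/\xi}$) satisfies the same form of bound up to a constant, which in each case reduces to a short inequality on the ratio $t/\xi$ in the relevant range.
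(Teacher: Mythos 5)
Your proposal is correct, and the deferred boundary checks are indeed routine: in each case the constrained optimum at $y=\xi^2$ reduces to the boundedness of $u^{\gamma}e^{-u}$ on $(0,\infty)$ for the relevant $\gamma>0$, so there is no genuine gap. The skeleton coincides with the paper's: the same decomposition of $f_{t,\alpha}'$, the same trivial disposal of the terms without the factor $t$, and all the work concentrated on $t e^{-t/z}/(z(z+1)^{\alpha+1})$. The difference lies in how that main term is handled. You use two complementary pointwise bounds --- $C_1\sqrt{t/\xi}$ for small $\xi$ (discarding $|z+1|^{-\alpha-1}$) and $C_\alpha t^{-\alpha/2}\xi^{-(\alpha+2)/2}$ for large $\xi$ (via $|z+1|\geq|z|$) --- spliced at the crossover $\xi=1/t$. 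The paper's Lemma~\ref{lem:g_inv} instead keeps the full denominator $((\xi+1)^2+s)^{(\alpha+1)/2}\sqrt{\xi^2+s}$, performs a single optimization in $\tau=\xi^2+s$ to get the majorant $(c t\xi+1)^{-(\alpha+1)/2}(c t\xi)^{-1/2}$, and then rescales $\xi\mapsto\xi/t$ to obtain a $t$-independent integral $\int_0^\infty(\xi+1)^{-(\alpha+1)/2}\xi^{-1/2}\,d\xi$, convergent precisely because $\alpha>0$. The two arguments encode the same information --- the paper's single majorant has exactly your two asymptotic regimes built in, with the crossover at $t\xi\sim 1$ --- but the unified version spares you the case analysis at the boundary of the admissible range of $|z|$, while yours makes more transparent why $\alpha>0$ removes the logarithm present at $\alpha=0$.
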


The estimate \eqref{eq:f_t_norm_bound} can be proved in a way similar to
the estimate \eqref{eq:phi_t_bound}.
We need the following auxiliary result.
\begin{lemma}
	\label{lem:g_inv}
	Let $p >0$, and define 
	\begin{equation}
		\label{eq:g_ta_inv}
		g_{t,p}(\xi,s) \coloneqq \frac{e^{-t\xi/(\xi^2+s)}}{ \big((\xi+1)^2+s \big)^{p+1/2} \sqrt{\xi^2+s}} 
	\end{equation}
	for $t,\xi >0$ and $s \geq 0$. Then 
	\begin{align}
		\label{eq:g_inv_bound1}
		\sup_{t > 0} t
		\int_0^{\infty} \sup_{s \geq 0} g_{t,p}(\xi,s) d\xi< \infty.
	\end{align}
\end{lemma}
\begin{proof}
	Define $\zeta \coloneqq t \xi >0$ and 
	\[
	G_{\zeta} (\tau) \coloneqq \frac{e^{-\zeta/\tau}}{(\tau+1)^{p+1/2}  \sqrt{\tau}},
	\quad \tau >0.
	\]
	Then
	\begin{equation}
		\label{eq:v_zeta_bound}
		g_{t,p}(\xi,s)
		\leq G_{\zeta} (\xi^2+s)
	\end{equation}
	for all $t,\xi >0$ and $s \geq 0$.
	We have
	\[
	G_{\zeta}'(\tau) = \frac{-2(p + 1)\tau^2+(2\zeta-1) \tau + 2\zeta }{2  (\tau+1)^{p+3/2}\tau^{5/2} } e^{-\zeta/\tau}.
	\]
	There exists a unique positive solution $\tau_1 = \tau_1(\zeta)$ of the equation 
	\[
	-2(p+ 1)\tau^2+(2\zeta-1) \tau + 2\zeta = 0, 
	\]
	and
	\[
	\tau_1(\zeta) = \frac{(2\zeta - 1) + \sqrt{(2\zeta - 1)^2 + 16\zeta (p+1)} }{4(p+1)}.
	\]
	Since
	\[
	\sqrt{(2\zeta - 1)^2 + 16\zeta(p+1)}  \geq 
	\sqrt{(2\zeta - 1)^2 + 16 \zeta}
	\geq 1,
	\]
	we have
	\[
	\tau_1 \geq c_1 \zeta,
	\quad \text{where~} c_1\coloneqq \frac{1}{2(p+1)}.
	\]
	This yields
	\begin{equation}
		\label{eq:sup_v_zeta_bound}
		\sup_{s \geq 0}G_{\zeta} (\xi^2+s) \leq 
		\sup_{\tau \geq 0 } G_{\zeta}(\tau) =
		G_{\zeta}(\tau_1) \leq \frac{1}{(c_1\zeta + 1)^{p+1/2} \sqrt{c_1\zeta}}
	\end{equation}
	for all $t,\xi >0$.
	This together with \eqref{eq:v_zeta_bound} implies that
	\begin{align*}
		t\int_0^{\infty} \sup_{s \geq 0} g_{t,p}(\xi,s) d\xi 
		&\leq 
		t\int_0^{\infty} \frac{1}{(c_1t\xi + 1)^{p+1/2} \sqrt{c_1t\xi}} d\xi \\
		&=  \frac{1}{c_1}
		\int_0^{\infty} \frac{1}{(\xi + 1)^{p+1/2} \sqrt{\xi}} d\xi < \infty
	\end{align*}
	for all $t >0$.
	Thus, the estimate \eqref{eq:g_inv_bound1} holds.
\end{proof}

\begin{proof}[Proof of Proposition~\ref{prop:ft_bound}]
	Let $t,\alpha >0$ and let $f_{t,\alpha}$ be defined as in \eqref{eq:ft_def_inv}.
	The derivative of $f_{t,\alpha}$ is given by
	\[
	f_{t,\alpha}'(z) = 
	\left( \frac{t}{z (z+1)^{\alpha+1} }
	-\frac{\alpha z}{(z+1)^{\alpha + 2}} + \frac{1}{(z+1)^{\alpha + 2}} 
	\right) e^{-t/z}.
	\]
	Therefore,
	\begin{align*}
		&\sup_{\eta \in \mathbb{R}} |f_{t,\alpha}'(\xi+i \eta)| \\
		&\quad \leq 
		t\sup_{\eta \in \mathbb{R}} \frac{e^{-t\xi/(\xi^2+\eta^2)}  }{\big((\xi+1)^2+\eta^2\big)^{(\alpha+1)/2} \sqrt{\xi^2+\eta^2}} + 
		\alpha \sup_{\eta \in \mathbb{R}} \frac{ e^{-t\xi/(\xi^2+\eta^2)} \sqrt{\xi^2+\eta^2}  }{\big((\xi+1)^2+\eta^2\big)^{\alpha/2 + 1}} \\
		&\qquad + 
		\sup_{\eta \in \mathbb{R}} \frac{e^{-t\xi/(\xi^2+\eta^2)}}{\big((\xi+1)^2+\eta^2\big)^{\alpha/2 + 1}}
		\notag \\
		&\quad \eqqcolon \phi_{t,1}(\xi) + \phi_{t,2}(\xi) + \phi_{t,3}(\xi) 
	\end{align*}
	for all $\xi > 0$.
	To the term $\phi_{t,1}$, we apply
	Lemma~\ref{lem:g_inv} with $p =\alpha/2$ and $s = \eta^2$.
	Then we obtain
	\[
	\sup_{t > 0}\int_0^{\infty} \phi_{t,1}(\xi) d\xi < \infty.
	\]
	The term $\phi_{t,2}$ satisfies
	\[
	\phi_{t,2}(\xi)
	\leq \alpha 
	\sup_{\eta \in \mathbb{R}}
	\frac{\sqrt{(\xi+1)^2+\eta^2}}{\big((\xi+1)^2+\eta^2\big)^{\alpha/2+1}} 
	= \frac{\alpha}{(\xi+1)^{\alpha+1}}
	\]
	for all $\xi>0$.
	We also have that
	\[
	\phi_{t,3}(\xi) \leq \frac{1}{(\xi+1)^{\alpha+2}}
	\]
	for all $\xi>0$. Therefore,
	\[
	\sup_{t > 0}\int_0^{\infty} \phi_{t,2}(\xi) + \phi_{t,3}(\xi) d\xi < \infty.
	\]
	Thus, the desired estimate \eqref{eq:f_t_norm_bound} is obtained.
\end{proof}

We are now ready to prove Theorem~\ref{thm:Inv_bound}.
\begin{proof}[Proof of Theorem~\ref{thm:Inv_bound}]
	Let $t,\alpha >0$, and let the function $f_{t,\alpha}$ 
	be defined as in \eqref{eq:ft_def_inv}.
	As seen in Example~\ref{ex:LM_calculus}, we have
	\[
	f_{t,\alpha}(A) = A(A+I)^{-1} e^{-tA^{-1}} (A+I)^{-\alpha}.
	\]
	Hence
	\begin{equation}
		\label{eq:inv_gen_estimate1}
		\|e^{-tA^{-1}} (A+I)^{-\alpha}\| \leq \|I+A^{-1}\| \, \|f_{t,\alpha}(A)\|.
	\end{equation}
	By the estimate \eqref{eq:fA_bound_Bnorm},
	\begin{equation}
		\label{eq:inv_gen_estimate2}
		\|f_{t,\alpha}(A)\| \leq  2 K^2  \|f_{t,\alpha}\|_{\mathcal{B}_0},
	\end{equation}
	where $K \coloneqq \sup_{t \geq 0} \|e^{-tA}\|$.
	Combining the estimates \eqref{eq:inv_gen_estimate1} and \eqref{eq:inv_gen_estimate2}
	with Proposition~\ref{prop:ft_bound},
	we obtain the desired conclusion \eqref{eq:inv_bounded}.
\end{proof}

\subsection{Polynomially stable semigroups}
Here we consider a polynomially stable $C_0$-semigroup  $(e^{-tA})_{t \geq 0}$ 
on a Hilbert space. 
The next result shows how a decay estimate for $(e^{-tA})_{t \geq 0}$ 
can be transferred to that for $(e^{-tA^{-1}})_{t \geq 0}$.
As in Theorem~\ref{thm:CT_bound_poly}, 
we use the notation $\lfloor \xi \rfloor \coloneqq
\max\{k \in \mathbb{Z}: k \leq \xi \}$ for $\xi \in \mathbb{R}$.
\begin{theorem}
	\label{thm:Inv_bound_poly}
	Let $-A$ be the generator of a polynomially stable $C_0$-semigroup 
	with parameter $\beta >0$
	on a Hilbert space $H$. Then,
	for all $\alpha >0$, 
	\begin{equation}
		\label{eq:Inv_bound}
		\|e^{-tA^{-1}} A^{-\alpha}\| = O\left(
		\frac{(\log t)^{k+1} }{t^{\alpha/(2+\beta)}}
		\right)\quad (t \to \infty),\quad \text{where~}
		k \coloneqq \left\lfloor
		\frac{2\alpha}{2+\beta}
		\right \rfloor.
	\end{equation}
\end{theorem}

To prove Theorem~\ref{thm:Inv_bound_poly}, we need the following estimate 
for $\|f_{t,2p}\|_{\mathcal{B}_0, q }$ with $0 < q < 1/2$, where 
$f_{t,2p}$ is defined by
\eqref{eq:ft_def_inv} with $\alpha = 2p$.
\begin{proposition}
	\label{prop:f_t_norm}
	Let $p >0$ and $ q  \in (0,1/2)$.  Then 
	the function $f_{t,2p}$ defined by \eqref{eq:ft_def_inv} with $\alpha = 2p$
	satisfies
	\begin{equation}
		\label{eq:f_t_bound}
		\|f_{t,2p}\|_{\mathcal{B}_0, q } =	\begin{cases}
			O\left(
			\dfrac{1}{t^{p}}
			\right), & p < q , \vspace{3pt}\\
			O\left(
			\dfrac{\log t}{t^{p}}
			\right), & p = q , \vspace{3pt}\qquad (t \to \infty).\\
			O\left(
			\dfrac{1}{t^{ q }}
			\right), & p >  q ,
		\end{cases}
	\end{equation}
\end{proposition}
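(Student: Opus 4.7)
The plan is to adapt the argument of Proposition~\ref{prop:ft_bound} by exploiting the weight $\psi_q$ to extract $t$-decay. Differentiating yields, after a short simplification,
\[
f'_{t,p}(z) = \left(\frac{t}{z(z+1)^{2p+1}} + \frac{1}{(z+1)^{2p+1}} - \frac{(2p+1)z}{(z+1)^{2p+2}}\right)e^{-t/z},
\]
so that, with $s=\eta^2$, I obtain $\sup_{\eta \in \mathbb{R}}|f'_{t,p}(\xi+i\eta)| \leq \Phi_1(\xi)+\Phi_2(\xi)+\Phi_3(\xi)$, where $\Phi_k(\xi)=\sup_{s\geq 0} A_k(\xi,s)$ and
\begin{align*}
A_1(\xi,s) &= \frac{e^{-t\xi/(\xi^2+s)}}{((\xi+1)^2+s)^{p+1/2}},\\
A_2(\xi,s) &= \frac{t\,e^{-t\xi/(\xi^2+s)}}{\sqrt{\xi^2+s}\,((\xi+1)^2+s)^{p+1/2}},\\
A_3(\xi,s) &= \frac{(2p+1)\sqrt{\xi^2+s}\,e^{-t\xi/(\xi^2+s)}}{((\xi+1)^2+s)^{p+1}}.
\end{align*}

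The trichotomy in \eqref{eq:f_t_bound} is produced by $\Phi_2$. Since $A_2 = t\,g_{t,p+1/2}(\xi,s)$ in the notation of Lemma~\ref{lem:g_inv} and $p+1/2>1/2$, the proof of that lemma yields the pointwise bound $\Phi_2(\xi) \leq t/\bigl((ct\xi+1)^{p+1/2}\sqrt{ct\xi}\bigr)$ with $c = 1/(2p+2)$. The substitution $y = ct\xi$ gives
\[
\int_0^\infty \psi_q(\xi)\,\Phi_2(\xi)\,d\xi \leq \frac{1}{c}\int_0^\infty \frac{\psi_q(y/(ct))}{(y+1)^{p+1/2}\sqrt{y}}\,dy.
\]
Splitting at $y=ct$ and using \eqref{eq:psi_def}, the right-hand side equals $(ct)^{-q}\int_0^{ct} \tfrac{y^{q-1/2}}{(y+1)^{p+1/2}}\,dy$ plus a tail $\int_{ct}^\infty \tfrac{dy}{(y+1)^{p+1/2}\sqrt{y}}$ of order $O(t^{-p})$. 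The integrand of the first piece is $\sim y^{q-p-1}$ at infinity, integrable iff $p>q$; this yields $O(t^{-q})$ when $p>q$ (finite limit), $O(\log t/t^q)$ when $p=q$ (logarithmic growth), and $O(t^{-p})$ when $p<q$ (the factor $(ct)^{q-p}$ from the inner integral combines with the prefactor $(ct)^{-q}$), giving exactly \eqref{eq:f_t_bound}.

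For $\Phi_1$ and $\Phi_3$, writing $u=\xi^2+s \geq \xi^2$ and maximising $u \mapsto e^{-t\xi/u}/u^{p+1/2}$ gives $\Phi_k(\xi) \leq C_p/(t\xi)^{p+1/2}$ when $\xi \leq t/(p+1/2)$; for $\xi > t/(p+1/2)$ the maximum is at $u=\xi^2$, giving a bound carrying $e^{-t/\xi}$; and the trivial bound $\Phi_k(\xi) \leq 1/(\xi+1)^{2p+1}$ handles very small $\xi$. Splitting $\int_0^\infty \psi_q(\xi)\,\Phi_k(\xi)\,d\xi$ over the ranges $(0,1/t)$, $(1/t,1)$, $(1,t/(p+1/2))$, $(t/(p+1/2),\infty)$ with the appropriate bound on each yields contributions of respective orders $t^{-(q+1)}$, $t^{-\min(p+1/2,\,q+1)}$, $t^{-\min(p+1/2,\,2p)}$, and $t^{-2p}$, with logarithmic corrections only in the borderline cases $p = q+1/2$ or $p=1/2$. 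Since $q<1/2$ forces each of $q+1$, $p+1/2$, and $2p$ to dominate $\min(p,q)$, all these terms are absorbed by the $\Phi_2$ estimate. The main obstacle is the $\Phi_2$ analysis, where the trichotomy in \eqref{eq:f_t_bound} emerges directly from the behaviour at infinity of $y^{q-p-1}$; the rest is routine but requires careful bookkeeping of the $\xi$-subranges.
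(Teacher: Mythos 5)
Your proof is correct and follows essentially the same route as the paper's: you bound $\sup_{\eta}|f_{t,p}'(\xi+i\eta)|$ by suprema over $s=\eta^2$ of the moduli of the terms of the derivative, invoke the maximization from the proof of Lemma~\ref{lem:g_inv} (with $\alpha=p+1/2$) for the dominant term carrying the factor $t$, and obtain the trichotomy from the integral $\int \psi_q(\xi)\,(c t\xi+1)^{-p-1/2}(c t\xi)^{-1/2}\,t\,d\xi$ split at $\xi=1$, exactly as in Steps 1--2 of the paper's argument. The only differences are cosmetic: the paper absorbs the $1/(z+1)^{2p+2}$ term into the dominant one via $|z|\leq|z+1|$ (yielding the factor $t+1$) and treats the remaining term with a dedicated maximization, whereas you keep three terms and dispose of the two subdominant ones by a range-splitting argument; both yield contributions dominated by $t^{-\min(p,q)}$.
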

\begin{proof}
	Step~1:
	Fix $p >0$ and $ q  \in (0,1/2)$.
	The derivative $f_{t,2p}'$ is given by
	\[
	f_{t,2p}'(z) = 
	\left(
	\frac{t}{z(z+1)^{2p+1}} 
	-\frac{2p z}{(z+1)^{2p+2}}  +
	\frac{1}{(z+1)^{2p+2}} 
	\right) e^{-t/z}.
	\]
	Since
	\[
	\left|
	\frac{1}{(z+1)^{2p+2}} 
	\right| 
	\leq
	\left|
	\frac{1}{z(z+1)^{2p+1}} 
	\right|
	\]
	for all $z \in \mathbb{C}_+$,
	we have
	\begin{equation}
		\label{eq:f_gh_bound_Inv}
		\sup_{\eta \in \mathbb{R}}
		|f_{t,2p}' (\xi +i\eta)| \leq (t+1)
		\sup_{s \geq 0}g_{t,p}(\xi ,s) +
		2p
		\sup_{s \geq 0} h_{t,p}(\xi ,s),
	\end{equation}
	where 
	$g_{t,p}$ is defined by \eqref{eq:g_ta_inv}
	and
	\begin{align}
		\label{eq:h_tp_inv}
		h_{t,p}(\xi ,s) &\coloneqq
		\frac{ e^{-t\xi /(\xi ^2+s)}\sqrt{\xi ^2+s} }{
			\big((\xi +1)^2+s\big)^{p+1} 
		} .
	\end{align}
	for $\xi>0$ and $s \geq 0$.
	We will prove in Steps~2 and 3 below that
	\begin{align}
		\int_0^{\infty} \psi_q(\xi ) \sup_{s \geq 0}g_{t,p}(\xi ,s) d\xi  &=
		\begin{cases}
			O\left(
			\dfrac{1}{t^{p+1}}
			\right), & p < q , \vspace{3pt}\\
			O\left(
			\dfrac{\log t}{t^{p+1}}
			\right), & p = q , \qquad (t \to \infty) \vspace{3pt}\\
			O\left(
			\dfrac{1}{t^{ q +1}}
			\right), & p >  q ,
		\end{cases} \label{eq:gt_bound} 
	\end{align}
	and
	\begin{align}
		\int_0^{\infty} \psi_q(\xi )  \sup_{s \geq 0} h_{t,p}(\xi ,s) d\xi  &=
		\begin{cases}
			O\left(
			\dfrac{1}{t^{2p}}
			\right), &  p< \dfrac{1}{2}, \vspace{3pt}  \\
			O\left(
			\dfrac{\log t}{t}
			\right), &  p= \dfrac{1}{2}, \qquad (t \to \infty).\vspace{3pt}  \\
			O\left(
			\dfrac{1}{t}
			\right), &  p> \dfrac{1}{2},
		\end{cases}
		\label{eq:ht_bound}
	\end{align}
	The desired conclusion \eqref{eq:f_t_bound} follows immediately from
	the estimates \eqref{eq:f_gh_bound_Inv}--\eqref{eq:ht_bound}.
	
	Step~2:
	In this step, we will prove the estimate \eqref{eq:gt_bound} for $g_{t,p}$.
	From the estimates \eqref{eq:v_zeta_bound} and \eqref{eq:sup_v_zeta_bound} in the proof of Lemma~\ref{lem:g_inv},
	we obtain
	\begin{equation}
		\label{eq:g_tp_bound}
		\sup_{s \geq 0}g_{t,p}(\xi ,s) \leq \frac{1}{(c_1t\xi + 1)^{p+1/2} \sqrt{c_1t\xi }}
	\end{equation}
	for all 
	$t,\xi>0$ and 
	some constant $c_1>0$ depending only on $p$.

	Suppose that $t > 1$.
	Since
	the function $\psi_q$, defined as in \eqref{eq:psi_def}, satisfies
	$\psi_q(\xi) = \xi^q$ for all $\xi \in (0,1)$, 
	the estimate \eqref{eq:g_tp_bound} gives
	\begin{align*}
		\int_0^{1} \psi_{ q }(\xi ) \sup_{s \geq 0}g_{t,p}(\xi ,s) d\xi   
		&\leq \int^1_0 \frac{\xi ^{ q }}{(c_1t\xi  + 1)^{p+1/2} \sqrt{c_1t\xi }} d\xi  \\
		&=
		\frac{1}{(c_1t)^{ q +1}}
		\int_0^{c_1t} \frac{\xi ^{ q }}{(\xi  + 1)^{p+1/2} \sqrt{\xi }} d\xi  \\
		&\leq
		\frac{1}{(c_1t)^{ q +1}}
		\left(
		\int_0^{c_1} \frac{1}{\xi ^{1/2- q }} d\xi  + 
		\int_{c_1}^{c_1t} \frac{1}{\xi ^{p- q  + 1}} d\xi  
		\right).
	\end{align*}
	Hence
	\begin{equation}
		\label{eq:g_t_below}
		\int_0^{1} \psi_{ q }(\xi ) \sup_{s \geq 0}g_{t,p}(\xi ,s) d\xi   
		=
		\begin{cases}
			O\left(
			\dfrac{1}{t^{p+1}}
			\right), & p <  q , \vspace{3pt} \\
			O\left(
			\dfrac{\log t}{t^{p+1}}
			\right), & p =  q , \vspace{3pt} \\
			O\left(
			\dfrac{1}{t^{ q +1}}
			\right), & p >  q 
		\end{cases}
	\end{equation}
	as $t \to \infty$.
	Recalling that $\psi_q(\xi) = 1$ for all $\xi \geq  1$, 
	we also have
	\begin{align*}
		\int_1^{\infty}\psi_{ q }(\xi ) \sup_{s \geq 0}g_{t,p}(\xi ,s) d\xi   
		\leq
		\frac{1}{c_1t}
		\int_{c_1t}^{\infty} \frac{1}{(\xi +1)^{p+1/2} \sqrt{\xi }} d\xi  
		\leq 
		\frac{1}{c_1t}
		\int_{c_1t}^{\infty} \frac{1}{\xi ^{p+1}} d\xi ,
	\end{align*}
	which yields
	\begin{equation}
		\label{eq:g_t_above}
		\int_1^{\infty} \psi_{ q }(\xi ) \sup_{s \geq 0}g_{t,p}(\xi ,s) d\xi   =
		O\left(
		\frac{1}{t^{p+1}}
		\right)
	\end{equation}
	as $t \to \infty$.
	From  \eqref{eq:g_t_below} and \eqref{eq:g_t_above},
	we obtain the estimate \eqref{eq:gt_bound}  for $g_{t,p}$.
	
	Step~3:
	It remains to prove the estimate \eqref{eq:ht_bound} for $h_{t,p}$.
	Define $\zeta \coloneqq t\xi $ and
	\[
	H_{\zeta} (\tau) \coloneqq \frac{e^{-\zeta/\tau }}{(\tau+1)^{p+1/2}  },
	\quad \tau >0.
	\]
	Then $H_{\zeta}$ has similar properties to $G_{\zeta}$ used in the proof of Lemma~\ref{lem:g_inv}. In fact,
	\begin{equation}
		\label{eq:h_tp_estimate1}
		h_{t,p}(\xi ,s) \leq 
		H_{\zeta}(\xi ^2+s)
	\end{equation}
	for all $t, \xi>0$ and $s \geq 0$. There exists a unique positive solution 
	$\tau_2 = \tau_2(\zeta)$ of $H_{\zeta}'(\tau) = 0$, and
	\[
	\sup_{\tau \geq 0 } H_{\zeta}(\tau) =
	H_{\zeta}(\tau_2).
	\]
	Moreover,
	$
	\tau_2 \geq c_2 \zeta
	$
	for some constant $c_2>0$ depending only on $p$.
	Therefore,
	\begin{equation}
		\label{eq:h_tp_estimate2}
		\sup_{s \geq 0}H_{\zeta} (\xi ^2+s) \leq
		H_{\zeta}(\tau_2) \leq \frac{1}{(c_2\zeta + 1)^{p+1/2}}
	\end{equation}
	for all $t,\xi >0$.
	
	By definition, $\psi_{ q } (\xi ) \leq 1$ for all $\xi  >0$. Combining this with the estimates 
	\eqref{eq:h_tp_estimate1} and \eqref{eq:h_tp_estimate2}, we obtain
	\begin{align*}
		\int_0^{t} \psi_{ q }(\xi ) \sup_{s \geq 0} h_{t,p}(\xi ,s) d\xi  
		&\leq 
		\int_0^{t} \frac{1}{(c_2t\xi + 1)^{p+1/2}} d\xi  = \frac{1}{c_2 t }
		\int_0^{c_2t^2} \frac{1}{(\xi  + 1)^{p+1/2} } d\xi
	\end{align*}
	for all $t >0$.
	Hence
	\begin{equation}
		\label{eq:h_t_below}
		\int_0^{t}\psi_{ q }(\xi ) \sup_{s \geq 0} h_{t,p}(\xi ,s) d\xi   = 
		\begin{cases}
			O\left(
			\dfrac{1}{t^{2p}}
			\right), &  p< \dfrac{1}{2}, \vspace{3pt}  \\
			O\left(
			\dfrac{\log t}{t}
			\right), &  p= \dfrac{1}{2}, \vspace{3pt}  \\
			O\left(
			\dfrac{1}{t}
			\right), &  p> \dfrac{1}{2}
		\end{cases}
	\end{equation}
	as $t\to\infty$.
	On the other hand, we have from the definition \eqref{eq:h_tp_inv} of $h_{t,p}$ that
	\[
	h_{t,p}(\xi ,s) \leq \frac{1}{\xi ^{2p+1}}
	\]
	for all $t, \xi>0$ and $s \geq 0$. This gives
	\begin{equation}
		\label{eq:h_t_above}
		\int_t^{\infty} 
		\psi_q(\xi)
		\sup_{s \geq 0} h_{t,p}(\xi ,s) d\xi \leq 
		\int_t^{\infty}
		\frac{1}{\xi^{2p+1}} d\xi
		=
		O\left(
		\frac{1}{t^{2p}}
		\right)
	\end{equation}
	as $t\to\infty$.
	From  \eqref{eq:h_t_below} and \eqref{eq:h_t_above},
	we obtain the estimate \eqref{eq:ht_bound}  for $h_{t,p}$.
\end{proof}

Now we are in a position to give the proof of Theorem~\ref{thm:Inv_bound_poly}.
\begin{proof}[Proof of Theorem~\ref{thm:Inv_bound_poly}]
	Let $p \in (0,1/2)$ and define $f_{t,2p} \in \mathcal{B}$ by 
	\eqref{eq:ft_def_inv} with $\alpha = 2p$ for $t >0$.
	Using 
	Example~\ref{ex:bound_poly} in the case $\alpha = 2p$, $c= 1$, $q = p$, and
	\[
	f(z) = \frac{z}{z+1} e^{-t/z},
	\]
	we see that 
	there exists a constant $M>0$ such that 
	\[
	\|e^{-tA^{-1}} A^{-(2+\beta)p} \| \leq M \|f_{t,2p} \|_{\mathcal{B}_0,p}
	\]
	for all $t >0$.
	The estimate \eqref{eq:f_t_bound}
	with $p =  q$ yields
	\[
	\|e^{-tA^{-1}} A^{-(2+\beta)p} \| = O\left(
	\frac{\log t}{t^p}
	\right)
	\]
	as $t \to \infty$.
	Therefore, the desired estimate \eqref{eq:Inv_bound} holds in the case 
	$k = \lfloor  2\alpha/(2+\beta) \rfloor=0$.
	The case $k \geq 1$ follows as in the proof of 
	Theorem~\ref{thm:CT_bound_poly}.
\end{proof}

\begin{remark}
	The estimate \eqref{eq:f_t_bound}
	with $p =  q $ for $\|f_{t,2p}\|_{\mathcal{B}_0, q }$
	has been used in the proof of Theorem~\ref{thm:Inv_bound_poly}. 
	This is because 
	the case $p \not=  q $ yields
	a less sharp estimate, which can be seen from the 
	same argument as in Remark~\ref{rem:p_gamma}.
\end{remark}

\noindent
\textbf{Acknowledgements}
This work was supported by JSPS KAKENHI Grant Number JP20K14362.
The author would like to thank Yuri Tomilov for providing useful comments on 
Remarks~\ref{rem:optimality1} and \ref{rem:optimality2}
and for sharing the manuscript of \cite{Gomilko2023}.

\end{document}